\documentclass[10 pt]{amsart}
\usepackage{epsfig}
\usepackage{latexsym,amssymb,amsmath}
\usepackage{fullpage}
\usepackage[applemac]{inputenc}
\usepackage{graphicx}

\usepackage{eufrak}

\newtheorem{theorem}{Theorem}
\newtheorem{proposition}{Proposition}
\newtheorem{corollary}{Corollary}


\newcommand{\Keywords}[1]{\par\noindent 
{\small{\em Keywords\/}: #1}}

\def\Bbb{\mathbb}
\def\a{\alpha}

\def\d{\delta}
\def\D{\Delta}
\def\l{\lambda}

\def\l{\lambda}
\def\m{\mu}

\def\s{\sigma}

\title{A Product of integer partitions }

\author{Alain Goupil $^{1}$  } 
\address{Département de mathématiques et d'informatique\\ Université du Québec à Trois-Rivières\\
c.p. 500, Trois-Rivières, G9A 5H7} 
\email{alain.goupil@uqtr.ca}

\begin{document}

\footnotetext[1]{Work partially supported by a grant from NSERC.}

\begin{abstract}
            I present a bijection on integer partitions that leads to recursive expressions,  closed formulae and generating functions for the  cardinality  of certain sets of partitions of  a positive integer $n$.  The bijection leads also to a product on partitions that is associative with a natural grading thus defining a free associative algebra on the set of integer partitions. As an outcome of the computations, certain sets of integers appear that I call  difference sets and the product of the  integers in a difference set is an invariant for a family of sets of partitions.  The main combinatorial objects used in these constructions are the central  hooks of the Ferrers diagrams of partitions. 
\Keywords{partition, product, monoid, enumeration, linear representation.}  
\end{abstract}
\maketitle

\section{Introduction}
\label{intro}
A primitive strategy that lies at the hart of enumerative combinatorics is the  old Pythagorean school   idea of  using geometric objects  to study arithmetics. This connection between geometry and numbers dates back to the fourth century B.C. and one  famous illustration of  this duality  is the well known expansion of perfect squares as sums of odd integers. The immediate visual proof  of that identity is provided  by the following figure that breaks a square of dots into disjoint hooks of odd size:

\noindent
\begin{center}
\begin{tabular}{cc}

\begin{minipage}[t] {90 mm}\vspace{.0cm}
\vspace{.3 cm}
$n^2=1+3+\cdots +2n-1\qquad\qquad \longleftrightarrow$
\end{minipage}
&\begin{minipage}[t]{20 mm}\vspace{.0cm}
\includegraphics{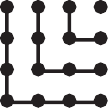}\\

\end{minipage}
\\
\end{tabular}
\end{center}
The hooks  in the preceding  figure were called {\it gnomons} by the Ancient greeks and were used in their geometric number theory  to provide recursive rules and identities on integers.  A first idea that is developped in this paper is to mimic this type of manipulation on sets of partitions by using  the same hooks.  A second idea that has guided this work is to develop an algebraic structure on the set $P$ of partitions by introducing a non commutative product on $P$. These two ideas are used to count partitions. 

The paper is organized as follow. In the remaining of section \ref{sec1}, the necessary notations are established. In section \ref{sec2} I present  the bijections and numerical results needed to define in section \ref{sec3} a product of partitions and describe the derived algebraic structures. In particular two linear representations as triangular matrices of dimension $3$ of the monoïd of partitions are presented. Section \ref{sec4} is devoted to enumeration of certain classes of partitions. In particular, an exact non recursive formula for  the computation of $p(n)$ is given.  In section \ref{sec5}, I give a generating function in several variables for the numbers $p((k_1,\ldots , k_r))$ of $r$-hooks with hook type $(k_1,\ldots , k_r)$. In section \ref{sec6}, I discuss invariance and prove a property of sets of partitions 
that share the same set of differences. I conclude in section \ref{sec7} with a number of remarks and questions arising from this work. 

 \subsection{Notation} \label{sec1}Most conventions and definitions needed in this paper are  borrowed from \cite{A} and \cite{AE} and are outlined in the following paragraphs.
 
 A {\em partition}  $\l=(\l_1, \ldots, \l_m)$  of a positive integer $n$ is a weakly decreasing sequence $\l_1\geq \l_2\geq \ldots \geq\l_m$ of positive integers such that $\sum_{i=1}^m\l_i=n$. 
  The number  $n$ is called the {\em weight} of $\l$ and we write $\l\vdash n$. I shall also use the multiplicative notation for partitions and write $\l=1^{\ell_1}2^{\ell_2}\ldots n^{\ell_n}$ to mean that the number $i$ appears $\ell_i$ times in the partition $\l$. The Ferrers diagram of a partition $\l$ is, in the cartesian convention,  an array of $m$ rows of cells ordered from bottom to top in increasing order of size such that the $i$th  row contains $\l_i$ cells and these Ferrers diagrams will also be denoted $\l$. The conjugate partition of a partition $\l$ is denoted $\l'$.  A {\em hook} is a partition with multiplicative form $\l=1^kn-k$ with $0\leq k\leq n-1$ and its Ferrers diagram is made  of one column of cells, possibly empty,  sitting on the leftmost  cell  of a unique row of cells which I will call the {\em corner} of the hook.   
 I will be interested with the {\it central hooks} of a partition $\l$ i.e. hooks whose corner lie on the main diagonal of the Ferrers diagram of $\l$ so that any integer partition can be seen as a diagonal superposition of  these hooks. When I will need to diagonally superpose $r$ hooks to obtain a partition $\l$, I will say that $\l$ is a $r${\em -hook}. $1$-hooks will sometimes be called hooks. Thus the Durfee square of a r-hook has size $r\times r$.  For example, in  figure \ref{fig1},  the partition $\l=(4,4,2,1))$ on the left is a $2$-hook and the partition $\m=(5,4,3,2,2)$ on the right is a $3$-hook. The innermost hook and outermost hook of a Ferrers diagram $\l$ will be called respectively  the {\it inner hook} and  the {\it outer hook} of $\l$  . 
 
 \begin{figure}[htbp]
\begin{center}
\includegraphics{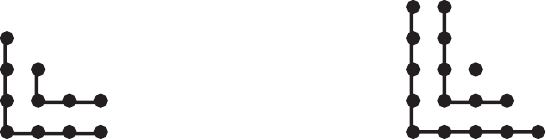}
\caption{a) A $2$-hook \hspace{2cm} b) A  $3$-hook\hspace{1cm} }
\label{fig1}
\end{center}
\end{figure}

Let us denote by $P$ the set of all partitions, $P(n)$  the set of partitions of $n$ and by  $p(n)$ the cardinality of $P(n)$.  Also denote by $P(n,r)$  the set of partitions of $n$ that are $r$-hooks and by $p(n,r)$ their number which is also the number of partitions of $n$ with Durfee square of size $r\times r$. We will also need the set $PI(n,r,k)$ of $r$-hooks of weight $n$ with inner hook of weight $k$ and its corresponding cardinality $pi(n,r,k)$ and the set $PO(n,r,k)$ of $r$-hooks of weight $n$ with outer hook of weight $k$ and its cardinality $po(n,r,k)$.  To any partition $\l$ which is a $r$-hook, we associate the $r$-tuple $(k_1,k_2,\ldots, k_r)$ of lengths of each central hook of $\l$ starting from the left of the Ferrers diagram and we will call it the {\it hook type} of $\l$. 
Thus  $P((k_1,k_2,\ldots , k_r))$ will denote the set of partitions with  hook type $(k_1,k_2,\ldots , k_r)$ and $p((k_1,k_2,\ldots , k_r))$ will be the cardinality of that set. The numbers $p((k_1,k_2,\ldots , k_r))$ can also be seen as the number of partitions of $k_1+k_2+\ldots + k_r$ with $k_i\geq k_{i+1}+2$ for all $1\leq i <r$ (see \cite{G} or partitions with $2$-distinct part in \cite{AE}).

Observe that $r$-hooks are partitionning the set $P(n)$ of partitions when $1\leq r\leq \lfloor \sqrt{n}\rfloor$: 
\begin{equation}
\label{eq0}
p(n)=\sum_{r=1}^{\lfloor \sqrt{n}\rfloor} p(n,r)
\end{equation}
and that any set of $r$-hooks of weight $n$ can itself be partitionned according to hook types:
\begin{equation}
\label{eq00}
p(n,r)=\sum_{\begin{subarray} {c}(k_1,k_2,\ldots , k_r)\\ \sum_{i=1}^r k_i=n, 
\, k_i\geq k_{i+1}+2
\end{subarray}}
p((k_1,k_2,\ldots , k_r))
\end{equation}
If a partition is a $r$-hook of weight $n$ with inner hook of weight $k$ then $n\geq r^2+r(k-1)$. Finally if $f(x)$ is any formal series, then $f(x)|_{x^n}$ will denote  the coefficient of $x^n$ in $f(x)$. 

\section{Bijections}\label{sec2}
\begin{theorem}
\label{th1} For positive  integers $k, n$,
 there is a bijection between the cartesian product  $P(n,r) \times P(k,1)$  and the set $PI(n+(k+1)r+k,r+1,k)$ of $(r+1)$-hooks of weight $n+(k+1)r+k$ with inner hook of weight $k$.
\end{theorem}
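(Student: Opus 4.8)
The idea is to build a map that takes a pair $(\l, \m)$ with $\l \in P(n,r)$ an $r$-hook of weight $n$ and $\m \in P(k,1)$ a hook of weight $k$, and produces an $(r+1)$-hook whose inner hook has weight $k$. The natural construction is geometric: $\m = 1^a(k-a)$ is a single hook; I would take the Ferrers diagram of $\m$ as the inner (innermost) central hook of a new partition, and then wrap the $r$ central hooks of $\l$ around it. The key constraint is that in an $(r+1)$-hook the hook types $(k_0, k_1, \dots, k_r)$ must satisfy $k_i \geq k_{i+1}+2$; since $\m$ has weight $k$ and we want it to be the inner hook (the last one), the first central hook of $\l$ (of length $k_1$, say) must when pushed outward one diagonal step have length $\geq k+2$. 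This forces us to enlarge each hook of $\l$ as it moves out one diagonal position: a central hook on diagonal $i$ that becomes a central hook on diagonal $i+1$ acquires two extra cells just to keep the arm and leg lengths the same, and we need additional padding to clear $\m$. Let me make this precise.

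**The explicit construction.**

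Write the hook type of $\l$ as $(k_1, \dots, k_r)$ with $k_i \ge k_{i+1}+2$ and $\sum k_i = n$. Think of an $r$-hook as determined by the arm length $a_i$ and leg length $\ell_i$ of its $i$-th central hook, where $k_i = a_i + \ell_i + 1$ (the $+1$ for the corner cell on the diagonal). I would define the image by: (i) placing $\m$, with arm $a$ and leg $\ell$ where $k = a+\ell+1$, as the innermost hook on diagonal $r+1$ (using $1$-indexing from outside, this is the last diagonal); (ii) placing, for each $i$, a hook on diagonal $i$ whose arm is $a_i + (k-i \cdot 0 \dots)$ — more carefully, whose arm and leg are those of the $i$-th hook of $\l$ shifted out by one diagonal, plus an extra $k+1$ cells distributed so that the nesting inequalities hold with exactly the right slack. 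A clean way: the $i$-th hook of the image ($1 \le i \le r$) is obtained from the $i$-th hook of $\l$ by moving it one step down-left along the diagonal (which automatically adds $2$ to its length, turning $k_i$ into $k_i+2 \ge k_{i+1}+2+2$, consistent) and then inserting $k+1$ extra columns of the appropriate heights between the arm of hook $i$ and hook $i+1$ so that the total added weight telescopes. Summing: each of the $r$ hooks of $\l$ gains $2$ from the diagonal shift, contributing $2r$; the innermost hook $\m$ contributes $k$; and the padding to clear $\m$ across all $r$ outer hooks contributes the remaining $n + (k+1)r + k - n - 2r - k = (k-1)r$, i.e. $k-1$ cells per outer hook — exactly the number needed to extend each hook's arm past the arm of $\m$. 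I would verify the arithmetic $2r + k + (k-1)r = (k+1)r + k$ matches the claimed weight increment.

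**Proving it is a bijection.**

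To show the map is well-defined I check that the resulting diagram is a genuine partition (rows weakly decreasing), that its Durfee square has size $(r+1)\times(r+1)$, and that the inner hook has weight exactly $k$ — all of which follow from tracking arm/leg lengths through the construction and using $k_i \ge k_{i+1}+2$. For injectivity and surjectivity I would describe the inverse explicitly: given an $(r+1)$-hook $\nu$ of weight $n+(k+1)r+k$ with inner hook of weight $k$, strip off the inner hook (it is determined, and equals some hook of weight $k$, giving back $\m$), then for each remaining central hook undo the diagonal shift and remove the $k-1$ padding cells, obtaining an $r$-hook; one must check the stripped object has weight $n$ and that its hook type still satisfies $k_i \ge k_{i+1}+2$ (it does, because removing the uniform padding and the diagonal shift decreases every length by the same amount $k+1$, preserving all gaps). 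The composition in both directions is the identity by inspection.

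**Main obstacle.**

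The routine-but-delicate part is bookkeeping the arm and leg lengths so that (a) the image always lands on hook type satisfying the strict $+2$ nesting with no accidental violation or collision with $\m$'s arm/leg, and (b) the weight increment is exactly $(k+1)r+k$ with no off-by-one error in how the corner cells and the padding columns are counted. In particular one must decide consistently whether the $k-1$ padding cells per hook go entirely into the arm, entirely into the leg, or are split, and whether $\m$ being placed as an arm-heavy versus leg-heavy hook (i.e. the choice of $a$ versus $\ell$) interacts with that choice — the cleanest resolution is probably to put all padding into arms and place $\m$ so its leg never exceeds the neighboring hook's leg, but checking this uses $\l_i$'s conjugate structure and is where I would spend the most care. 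Everything else is a direct verification that two explicitly-described maps are mutually inverse.
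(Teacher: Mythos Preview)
Your overall strategy matches the paper's: insert the hook $\mu$ as the innermost central hook and enlarge each of the $r$ central hooks of $\lambda$ by $k+1$ cells so that they wrap around it. Your weight arithmetic $(k+1)r+k$ is correct. However, the point you flag as the ``main obstacle'' --- how to split the $k+1$ extra cells between the arm and the leg of each outer hook --- is not a loose bookkeeping detail but the heart of the construction, and your proposed resolution (``put all padding into arms'') does not work. The split is \emph{forced} by the shape of $\mu$: if $\mu=1^{t}(k-t)$ has leg $t$ and arm $k-t-1$, then each hook of $\lambda$ must have its leg extended by exactly $t+1$ and its arm by exactly $k-t$. You cannot ``place $\mu$ so its leg never exceeds the neighbouring hook's leg'': $\mu$ is part of the input, not a choice. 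With your all-into-arms padding, the $r$-th hook of $\lambda$ would acquire new leg $\ell_r+1$; whenever $\mu$ has leg $t>\ell_r$ this fails to nest around $\mu$, so the map is not even well-defined on all of $P(n,r)\times P(k,1)$.

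There is also a confusion in your ``diagonal shift'' that ``automatically adds $2$'' to a hook's length: translating a hook does not change its size, and in fact in the correct construction the $i$-th hook of $\lambda$ remains on the $i$-th diagonal --- nothing moves outward. The entire increment of $k+1$ per hook comes from appending a vertical strip of height $t+1$ and a horizontal strip of length $k-t$ (this is exactly what the paper does). Once the padding is tied to $t$ in this way, the inverse is immediate: strip off the inner hook of weight $k$, read $t$ from it, and remove $t+1$ from every leg and $k-t$ from every arm of the remaining $r$ hooks. With that one correction your outline becomes a complete proof, essentially identical to the paper's.
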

\begin{proof} The following colored figure \ref{fig2}  provides a  proof  without words. 
\begin{figure}[htbp]
\begin{center}
\includegraphics{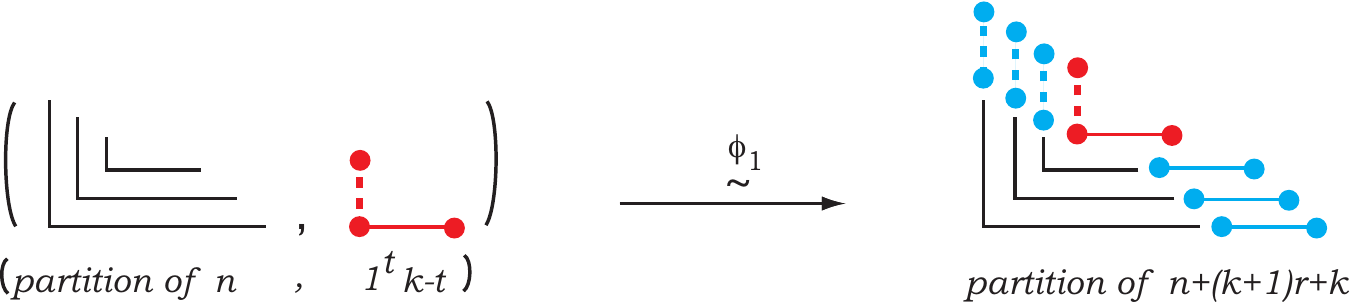}
\caption{A bijection between $P(n,r)\times P(k,1)$ and $P(n+(k+1)r+k),r+1,k)$}
\label{fig2}
\end{center}
\end{figure}
The function $\phi_1$ takes a couple $(\l,h)$ made of a  $r$-hook $\l$ and a $1$-hook $h=1^tk-t$,  places the hook $h$ inside the partition $\l$ as illustrated and adds to each hook of $\l$ a vertical part of size $t+1$ and a horizontal part of size $k-t$. When the initial partition is a $r$-hook of weight $n$, the resulting partition is a $(r+1)$-hook of weight $n+(k+1)r+k$ because we add $k+1$ dots to each of the $r$ hooks of $\l\vdash n$ and the hook $h$ of size $k$.The map $\phi_1$ is invertible since it is always possible to remove an inner hook  from  any partition along with the corresponding vertical and horizontal strips from each of its non inner hooks. 
\end{proof}
\begin{corollary}\label{corol1} For positive integers $k_1,\ldots , k_r$,
the number  $p((k_1,k_2,\ldots , k_r))$ of partitions with  hook type $(k_1,k_2,\ldots , k_r)$ is given by 
\begin{equation}\label{eq1}
p((k_1,k_2,\ldots , k_r))=k_r(k_{r-1}-k_r-1)(k_{r-2}-k_{r-1}-1)\cdots(k_1-k_2-1)
\end{equation}
\end{corollary}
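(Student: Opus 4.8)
\emph{Proof proposal.} The plan is to establish \eqref{eq1} by induction on the number $r$ of central hooks, using the bijection $\phi_1$ of Theorem~\ref{th1} to strip off the innermost hook at each step. Throughout I assume $(k_1,\dots,k_r)$ is the hook type of an actual $r$-hook, i.e.\ $k_i\ge k_{i+1}+2$ for $1\le i<r$; this is the only case of interest. For the base case $r=1$, the partitions with hook type $(k_1)$ are exactly the hooks $1^tk_1-t$ with $0\le t\le k_1-1$, so $p((k_1))=k_1$, in agreement with \eqref{eq1}.

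The heart of the argument is to read off what $\phi_1$ does to hook types. From the description of $\phi_1$ in the proof of Theorem~\ref{th1}, the map inserts $h=1^tk-t$ as the new inner hook and enlarges each of the $r$ hooks of $\l$ by a vertical strip of size $t+1$ and a horizontal strip of size $k-t$, hence by $k+1$ cells each. Consequently, if $\l$ has hook type $(k_1,\dots,k_r)$ and $h$ has weight $k$, then $\phi_1(\l,h)$ has hook type $(k_1+k+1,\dots,k_r+k+1,k)$, and this output type depends only on $(k_1,\dots,k_r)$ and $k$. Since $\phi_1$ is a bijection, it restricts to a bijection
\[
P((k_1,\dots,k_r))\times P((k))\;\longrightarrow\;P((k_1+k+1,\dots,k_r+k+1,k)),
\]
and because $p((k))=k$ this gives the recursion
\[
p\bigl((k_1+k+1,\dots,k_r+k+1,k)\bigr)=k\cdot p\bigl((k_1,\dots,k_r)\bigr).
\]

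For the inductive step, take an arbitrary hook type $(m_1,\dots,m_{r+1})$ of an $(r+1)$-hook and set $k=m_{r+1}$ and $k_i=m_i-m_{r+1}-1$ for $1\le i\le r$. One checks at once that $k_i\ge k_{i+1}+2$ and $k_r\ge 1$, so $(k_1,\dots,k_r)$ is a genuine hook type with $m_i=k_i+k+1$, and the recursion above yields $p((m_1,\dots,m_{r+1}))=m_{r+1}\cdot p((m_1-m_{r+1}-1,\dots,m_r-m_{r+1}-1))$. Applying the induction hypothesis \eqref{eq1} to the $r$-tuple $(k_1,\dots,k_r)$, and using $k_r=m_r-m_{r+1}-1$ together with $k_i-k_{i+1}-1=m_i-m_{i+1}-1$ (the common shift by $m_{r+1}+1$ cancels in every difference), the product collapses to $(m_r-m_{r+1}-1)(m_{r-1}-m_r-1)\cdots(m_1-m_2-1)$; multiplying by $m_{r+1}$ recovers \eqref{eq1} for $(m_1,\dots,m_{r+1})$, completing the induction.

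I do not expect a real obstacle here; the only points requiring care are purely bookkeeping. First, one must confirm that the hook type of $\phi_1(\l,h)$ is determined by the hook type of $\l$ and by $k$ alone, so that $\phi_1$ genuinely restricts to the stated bijection between fixed hook-type classes — this is immediate from the explicit picture of $\phi_1$. Second, one must verify the telescoping identity $k_i-k_{i+1}-1=m_i-m_{i+1}-1$, which is a one-line computation. With these in hand the induction runs cleanly.
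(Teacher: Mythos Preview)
Your proof is correct and follows essentially the same route as the paper: establish the base case $p((k))=k$, use the bijection $\phi_1$ of Theorem~\ref{th1} to derive the recurrence $p((k_1,\dots,k_r))=k_r\,p((k_1-(k_r+1),\dots,k_{r-1}-(k_r+1)))$ (the paper's equation~\eqref{eq11}), and finish by induction on $r$. You simply spell out more of the bookkeeping --- that $\phi_1$ respects hook types and that the consecutive differences telescope --- than the paper does.
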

\begin{proof}
We have to observe first  that the number of $1$-hooks  of length $k$ is equal to $k$. In other words
 \begin{equation*}
p(n,1)=n\qquad \forall n\geq 1.  
\end{equation*}
 Then, using the bijection in theorem \ref{th1}, we obtain the following  recurrence  on the number of central hooks:
\begin{equation}\label{eq11}
p((k_1,k_2,\ldots , k_r))=k_rp((k_1-(k_r+1),k_2-(k_r+1),\ldots , k_{r-1}-(k_r+1)))
\end{equation}
Then by induction hypothesis, we obtain (\ref{eq1}).
\end{proof}
A visual proof of corollary \ref{corol1} is obtained by observing that a difference $(k_{i-1}-k_i-1)$
between the sizes of two consecutive hooks  can be seen as  a {\it degree of freedom } of the hook of size $k_{i-1}$ with respect to the hook of size $k_i$. 

\begin{figure}[htbp]
\begin{center}
\includegraphics{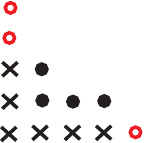}
\caption{The free positions on a hook}
\label{fig3}
\end{center}
\end{figure}
\noindent
For instance, in  figure \ref{fig3}, the positions with empty circles on the first hook are free and the positions with crosses are not.  Thus one way to obtain all partitions with hook type $(k_1,k_2,\ldots , k_r)$ is to start by sliding the inner hook in $k_r$ different positions, then for each position of the inner hook, we can slide the neighbour  hook  in a number of positions equal to its number of circles plus one and so on for each hook. Thus the multiplicative property of the set $P((k_1,k_2,\ldots , k_r))$ given by equation (\ref{eq1}) appears naturally. 

Special instances of corollary \ref{corol1}  are:
\begin{eqnarray}\label{du1}
p((2n-1,\ldots,3,1))&=&1\\ \label{pn2}
p(n,2)&=&\sum_{k=1}^{\lfloor \frac n 2\rfloor -1}k(n-2k-1)=\frac{\binom {n}3}{4} -\frac{(n-1)}8   
(n \mod 2)\\  \label{pn3}
p(n,3)&=&\sum_{k_3=1}^{\lfloor \frac n 3\rfloor -2 }\sum_{k_2=k_3+2}^{\lfloor \frac {n-k_3} 2\rfloor -1 }k_3(k_2-k_3-1)(n-2k_2-k_3-1)\\
&=& \sum_{d_3=1}^{\lfloor \frac n 3\rfloor -2 }\frac {d_3}4\binom{n-3d_3-2}{3}-\frac{d_3}8(n-3(d_3+1))((n+d_3)\mod 2)
\end{eqnarray}
We will develop an exact expression for $p(n,3)$ in section \ref{sec5}. The sequence of differences $k_r, \,k_{r-1}-k_r-1,\, k_{r-2}-k_{r-1}-1,\, \cdots k_1-k_2-1$ appearing in corollary \ref{corol1} will keep appearing in the rest of the paper and we will call  it the {\it  difference sequence} of the corresponding hook shape and its set of partitions. The associated set  $\{k_r, \,k_{r-1}-k_r-1,\, k_{r-2}-k_{r-1}-1,\, \cdots k_1-k_2-1\}$ will be called  the {\it difference set} associated with the hook type $(k_1,k_2,\ldots , k_r)$. Any hook type $(k_1,k_2,\ldots, k_{t})$ and its weight $n$  are recovered from the difference sequence 
$(d_1,d_2,\ldots, d_{t})$ as follow:
\begin{eqnarray}
(k_1,k_2,\ldots ,k_r)&=&
 (r-1+\sum_{i=1}^rd_i, r-2+\sum_{i=2}^rd_i, \ldots , d_{r-1}+d_r+1,d_r)\\
 \label{eqn}
 n&=&\sum_{i=1}^r id_i+\binom{r}{2}
\end{eqnarray}

The number $n$ given in (\ref{eqn}) wil be called the weight of $\d=(d_1,d_2,\ldots, d_{t})$ and denoted $|\d|$.
The bijection described in theorem \ref{th1} may be extended to a bijection from  pairs of partitions  to partitions as in the next corollary. Recall that for positive integers $n,r,k$,  $PO(n,r,k)$  is the set of $r$-hooks of weight $n$ with outer hook of weight $k$. 

\begin{corollary}  \label{cor3}
For any positive integers $n,m,r_1$ and $r_2$, there is a bijection $\phi_2$ between the cartesian product $P(n,r_1)\times PO(m,r_2,k)$ and $P(n+m+r_1(k+1),r_1+r_2)$.
\end{corollary}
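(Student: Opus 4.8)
The plan is to obtain $\phi_2$ by iterating the insertion map $\phi_1$ of Theorem~\ref{th1}, with the outer hook of $\m$ in the role of the $1$-hook $h$. Given a pair $(\l,\m)$ where $\l$ is an $r_1$-hook of weight $n$ and $\m$ is an $r_2$-hook of weight $m$ whose outer hook is $h=1^tk-t$, I would first dismantle $\m$ by repeatedly stripping off its inner hook, each such strip being one application of $\phi_1^{-1}$, thereby obtaining a list of $r_2$ one-hooks that encodes $\m$. I would then feed these one-hooks back, one at a time, as successive new innermost hooks starting from $\l$ and in the order opposite to the one in which they were stripped off $\m$, using $\phi_1$ at each step, and take $\rho=\phi_2(\l,\m)$ to be the result. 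Geometrically this is a single diagonal superposition: each of the $r_1$ central hooks of $\l$ is enlarged by a vertical strip of size $t+1$ and a horizontal strip of size $k-t$ (precisely the enlargement performed by $\phi_1$), the $r_1$ enlarged hooks become the outer central hooks of $\rho$, and $\m$ is inserted unchanged as the inner $r_2$ central hooks of $\rho$; a colored picture in the spirit of Figure~\ref{fig2} would turn this into a proof without words.

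Next I would check that $\phi_2$ is well defined, i.e. that $\rho$ is a genuine $(r_1+r_2)$-hook. The $r_1$ enlarged hooks of $\l$ remain properly nested among themselves because the enlargement is uniform; they properly contain the outer hook of $\m$ by exactly the local check in the proof of Theorem~\ref{th1}; and the inner $r_2$ hooks nest correctly because $\m$ is itself a valid Ferrers diagram. For the weight, each application of $\phi_1$ with a one-hook of weight $e$ adds $e+1$ cells to every hook already present, and the weights $e_1,\dots,e_{r_2}$ of the successive one-hooks stripped from $\m$ satisfy $\sum_j(e_j+1)=k+1$ because $\sum_j e_j=k-(r_2-1)$ (this is the relation $k_1=(r_2-1)+\sum_j e_j$ for $\m$, whose outer hook has weight $k=k_1$). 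Hence $\l$ contributes $n+r_1(k+1)$ cells and $\m$ contributes $m$, so $|\rho|=n+m+r_1(k+1)$; equivalently, the difference sequence of $\rho$ is the concatenation of those of $\l$ and $\m$, and the same total follows from~(\ref{eqn}).

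Invertibility is then automatic. Given $\rho$, strip its innermost $r_2$ central hooks one at a time via $\phi_1^{-1}$, reassemble the $r_2$ removed one-hooks into the partition $\m$ by re-inserting them, and read off $\l$ as the $r_1$-hook that remains; since each step inverts a bijection of Theorem~\ref{th1}, this is a two-sided inverse to $\phi_2$. The delicate point, which I expect to be the main obstacle, is to describe the target precisely: the image of $\phi_2$ is not the whole set $P(n+m+r_1(k+1),r_1+r_2)$, but the subset of those $(r_1+r_2)$-hooks $\rho$ whose innermost $r_2$ central hooks form an $r_2$-hook of weight $m$ with outer hook of weight $k$, i.e. the $\rho$ for which the stripping procedure returns an element of $PO(m,r_2,k)\times P(n,r_1)$. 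So the real content is to pin down this subset and identify it with the set appearing in the statement (just as, for $r_2=1$, Theorem~\ref{th1} lands inside $PI(\,\cdot\,,r_1+1,k)$ rather than inside all $(r_1+1)$-hooks); the nesting and weight checks are then routine extensions of Theorem~\ref{th1}.
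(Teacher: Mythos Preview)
Your construction is exactly the paper's: the paper also places $\m$ unchanged as the inner $r_2$ central hooks and enlarges each of the $r_1$ hooks of $\l$ by a column of size $t+1$ and a row of size $k-t$ (the first column and first row of $\m$). Your framing of this as an iteration of $\phi_1$ is a legitimate way to reduce the verification to Theorem~\ref{th1}, and the paper indeed presents Corollary~\ref{cor2} as precisely this kind of iterated insertion; so the two approaches coincide geometrically, and your iterated version just makes the bookkeeping explicit.

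Where you go further than the paper is in the last paragraph, and your worry there is well founded. The map $\phi_2$ is injective with the inverse you describe, but it is \emph{not} onto all of $P(n+m+r_1(k+1),r_1+r_2)$ for fixed $n,m,k$: a generic $(r_1+r_2)$-hook of that weight need not have its inner $r_2$ central hooks of total weight $m$ with outer hook of weight $k$. For instance, with $r_1=r_2=1$, $n=3$, $m=k=2$ one has $|P(3,1)\times PO(2,1,2)|=3\cdot 2=6$ while $p(8,2)=14$. The paper's proof glosses over this point; what the construction actually yields (in complete analogy with Theorem~\ref{th1}, whose target is $PI(\cdot,r+1,k)$ and not all $(r+1)$-hooks) is a bijection onto those $(r_1+r_2)$-hooks of weight $n+m+r_1(k+1)$ whose inner $r_2$-hook block lies in $PO(m,r_2,k)$, exactly as you say. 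Equivalently, summing over the admissible $(m,k)$ gives a bijection $\bigcup_{m,k} P(n,r_1)\times PO(m,r_2,k)\to P(N,r_1+r_2)$ for each fixed $N$, which is the form in which the product $*$ is later used. So your ``main obstacle'' is not an obstacle in your argument but a genuine imprecision in the target set of the statement; your description of the correct image is the right fix.
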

\begin{proof}
We apply a map $\phi_2$ similar to $\phi_1$ in theorem \ref{th1}. As shown in figure \ref{fig4}, any pair of partitions $(\l, \m) \in P(n,r_1)\times PO(m,r_2,k)$ is transformed into a partition where $u$ is placed inside $\l$ as if it was a hook. Then to each hook of $\l$,  we add on top one column of size equal to the first column of $\m$  and on the right, one row of size equal  to the size of the first row of $\m$. We thus obtain a partition of weight $n+m+r_1(k+1)$ with $r_1+r_2$ hooks. It is immediate that this transformation is bijective because we can dissect back any partition  that is a $(r_1+r_2)$-hook by first removing the inner $r_2$-hook $\m$ and then erasing from the remaining $r_1$-hook  the columns and rows of sizes equal to the sizes of the column and row of the outer hook of $\m$.

\begin{figure}[htbp]
\begin{center}
\includegraphics{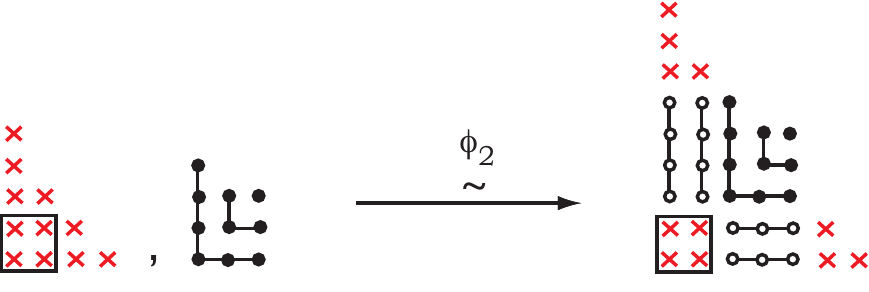}
\caption{A bijection between $P(n,r_1)\times PO(m,r_2,k)$
and $P(n+m+r_1(k+1),r_1+r_2)$}
\label{fig4}
\end{center}
\end{figure}
\end{proof}

Observe that we can also extend the bijection in corollary \ref{cor3} to a map from $m$-tuples of partitions to partitions.  In particular, we can map $m$-tuples of $1$-hooks to $m$-hooks.
\begin{corollary}
\label{cor2}
 For positive integers $d_1,d_2.\ldots, d_m$, there is a bijection from the cartesian product
$P(d_1,1)\times P(d_2,1)\times \cdots \times P(d_m,1)$ to the set of $m$-hooks with hook type $(k_1,k_2,\ldots , k_m)$ satisfying:
\begin{equation*}
k_1=r-1+\sum_{i=1}^rd_i,\; k_2= r-2+\sum_{i=2}^rd_i, \ldots , \; k_m=d_m
\end{equation*}
\end{corollary}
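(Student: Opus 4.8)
The plan is to induct on $m$, applying the bijection $\phi_1$ of Theorem~\ref{th1} once for each of the $m$ one-hooks (one could equally iterate the bijection $\phi_2$ of Corollary~\ref{cor3}); note that for the statement to be consistent one must read $r=m$. The base case $m=1$ is trivial, since a $1$-hook of weight $d_1$ has hook type $(d_1)=(k_1)$ and the map is the identity on $P(d_1,1)$. As a preliminary sanity check, Corollary~\ref{corol1} gives $|P((k_1,\ldots,k_m))| = k_m(k_{m-1}-k_m-1)\cdots(k_1-k_2-1) = d_m d_{m-1}\cdots d_1 = \prod_i |P(d_i,1)|$, so a bijection is at least numerically plausible.

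For the inductive step I would first feed $(h_1,\ldots,h_{m-1})$ to the bijection supplied by the inductive hypothesis, obtaining an $(m-1)$-hook $\sigma$ of hook type $(\ell_1,\ldots,\ell_{m-1})$ with $\ell_j=(m-1-j)+\sum_{i=j}^{m-1}d_i$. Then I would apply $\phi_1$ to the pair $(\sigma,h_m)$, with $r$ replaced by $m-1$ and $k$ by $d_m$ in Theorem~\ref{th1}; this inserts $h_m$ as a new innermost hook. Since $\phi_1$ is a bijection from $P(|\sigma|,m-1)\times P(d_m,1)$ onto $PI(|\sigma|+(d_m+1)(m-1)+d_m,\,m,\,d_m)$, it restricts to a bijection from the set of pairs whose first component has hook type $(\ell_1,\ldots,\ell_{m-1})$ onto its image, and composing with the inductive bijection yields the candidate map $(h_1,\ldots,h_m)\mapsto\phi_1(\sigma,h_m)$. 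It then only remains to identify that image.

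The one step that needs care is the bookkeeping of hook types under $\phi_1$, but it is short. Writing $h_m=1^t(d_m-t)$, the description of $\phi_1$ adds $(t+1)+(d_m-t)=d_m+1$ cells to every central hook of $\sigma$ and creates an inner hook of weight $d_m$, so a partition of hook type $(\ell_1,\ldots,\ell_{m-1})$ is sent to one of hook type $(\ell_1+d_m+1,\ldots,\ell_{m-1}+d_m+1,\,d_m)$; a one-line computation gives $\ell_j+d_m+1=(m-j)+\sum_{i=j}^{m}d_i=k_j$ for $j<m$, and $d_m=k_m$, which matches the $k_j$ in the statement. For the reverse inclusion I would use $\phi_1^{-1}$: any $m$-hook of hook type $(k_1,\ldots,k_m)$ has inner hook of weight $k_m=d_m$, and deleting it along with the associated vertical and horizontal strips of its outer hooks leaves an $(m-1)$-hook of hook type $(k_1-d_m-1,\ldots,k_{m-1}-d_m-1)=(\ell_1,\ldots,\ell_{m-1})$, which lies in the range of the inductive bijection. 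I do not expect any real obstacle; the only points to watch are that one passes from ``$\phi_1$ is a bijection between the full sets'' to its restriction to a single hook-type class, and that the insertion order is kept straight --- the outermost hook $h_1$ is built in first and the innermost hook $h_m$ last, so that the $j$-th central hook counted from the left ends up with weight $k_j$.
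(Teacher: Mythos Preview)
Your proposal is correct and follows the same approach as the paper: the paper's proof simply says the result is an immediate consequence of applying the bijection $\phi_1$ of Theorem~\ref{th1} sequentially from left to right on the sequence of hooks (with the inverse given by peeling off inner hooks), and your induction on $m$ is exactly an explicit unrolling of that sentence, with the added bookkeeping of hook types spelled out. The one point you flag---that $\phi_1$ restricts to a bijection on a single hook-type class because each outer hook gains exactly $d_m+1$ cells regardless of the shape of $h_m$---is precisely what makes the paper's terse argument work, so there is no substantive difference.
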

\begin{proof}
This is an immediate consequence of a sequential application of the bijection in  theorem \ref{th1} from left to right on the sequence of hooks and of the fact that inserting hooks in this manner is an associative operation. The inverse bijection consists in the peeling of hooks from a $m$-hook , reducing the $m$-hook  to a sequence of hooks whose size are the terms in the difference sequence of the $m$-hook.  The next figure shows an example of this bijection applied to a sequence of three hooks. 
\begin{figure}[htbp]
\begin{center}
\includegraphics{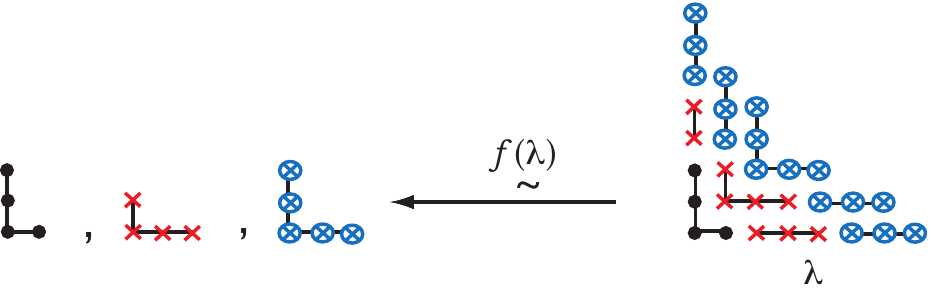}
\caption{A bijection between $P(d_1,1)\times P(d_2,1)\times P(d_3,1)$ and $P((2+d_1+d_2+d_3,1+d_2+d_3, d_3))$}
\label{fig5}
\end{center}
\end{figure}
\end{proof}

A noticeable  difference between the bijection in  corollary \ref{cor2} and the simple action of directly inserting hooks one inside another  without adding  vertical and horizontal segments  is that in corollary \ref{cor2},  there is no constraint on the  size of each hook involved, which is not the case in an ordinary insertion of hooks.  Thus the correspondance between $r$-tuples of hooks and partitions in corollary \ref{cor2} can be used as a definition for a unique non commutative  {\it hook factorisation} of a partition into a "product", or sequence, of hooks. Let us call $f(\l)$ this hook factorisation of a partition $\l$.

\section{ A free monoid structure}\label{sec3}

The bijection between Ferrers diagrams $\l,\m$ described in corollary \ref{cor3} and illustrated in figure \ref{fig4} gives birth to an associative, non commutative  binary operation that we will  call the {\it product of the partitions} $\l$ and $\m$ and denote it $\l*\m$.  The hook factorisation $f(\l)$ of a Ferrers diagram $\l$ into a non commutative product of  hooks as illustrated in figure \ref{fig5} is the inverse of the product and it satisfies 
\begin{equation}
\label{eq66}
f(\l*\m)=f(\l)* f(\m)
\end{equation}

Since hook factorisation is unique, the pair $(P,*)$  forms a free associative, non commutative monoid with the set of hooks as its set of indecomposable generators.   Thus hook factorization of Ferrers diagrams is analogous to prime factorization of integers.  There is a natural length function on $P$ given by the size of  the Durfee square of the partitions $\l\in P$ that we denote $du(\l)$. We thus have
\begin{equation}\label{eq7}
du(\l*\m)=du(\l)+du(\m)
\end{equation}
so that the map $du:(P,*)\rightarrow (N,+)$ is a monoid homomorphism.  Conjugation of partition, denoted $\l'$,  is compatible with product:
\begin{equation*}
(\l*\m)'=\l'*\m'
\end{equation*} 
and a square $n\times n$ is factored as a product of $n$ single dots as in figure \ref{fig6}.
\begin{figure}[htbp]
\begin{center}
\includegraphics{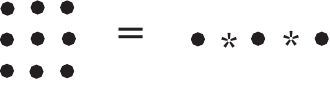}
\caption{Factorization of a square}
\label{fig6}
\end{center}
\end{figure}

 \noindent
 {\bf Quotient monoids}.  Let us define an equivalence relation $\sim$ on the set  $P$ of partitions as follow: $\l \sim\m\iff \l$ and $\m$ have the same hook type.  It is easy to verify that $\sim$ is stable under right and left multiplication i.e. $\sim$ is a congruence on $(P,*)$. That is, for every $\l,\m,\a\in P, \l \sim\m\Rightarrow \l*\a \sim\m*\a$ and $ \a*\l \sim\a*\m$.  
 
 The set $P/\sim $ of congruence classes of $P$ is  also a monoid called the quotient of $P$ by $\sim$  and we have the choice among several indexing sets for the elements of $P/\sim$.  Let us consider the following three sets: the set  $H$ of hook types, the set $\D$ of differences sequences and the sets $P_{\leq r}(n-r^2)$  of partitions of $n-r^2$ with at most $r$ parts for all $1\leq r\leq \sqrt(n)$. It is known that these three sets have the same cardinality and we give in the following proposition  the bijective maps  between these three sets.
 \begin{proposition} \label{prop1}

 Let \\
 $H(n)=\{(k_1,\ldots,k_r):k_i\geq k_{i+1}+2,1\leq i < r,r\leq\sqrt{n} \}$ be the set of all hook types of $n$.  \\ $\D(n)=\{(d_1,\ldots,d_r):\sum_{i=1}^r id_i+\binom r 2 = n, r\leq\sqrt{n} \}$ be the set of difference sequences of $n$.\\
 $\Pi (n)=\{ \m\vdash n-r^2,\ell(\m)\leq r,r \leq\sqrt{n} \}$ be the set of partitions of $n-r^2$ with at most $r$ parts. 
 
 \noindent
 We have  $card (H(n))=card (\D(n))=card (\Pi (n))$.
\end{proposition}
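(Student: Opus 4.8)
The plan is to exhibit explicit bijections $H(n)\to\D(n)$ and $H(n)\to\Pi(n)$ and check that each is invertible; this establishes the three equalities of cardinalities at once. The key point is that every element of $H(n)$, $\D(n)$, and $\Pi(n)$ carries the same hidden parameter, namely the rank $r$ with $1\le r\le\lfloor\sqrt n\rfloor$, and the bijections will preserve $r$.

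First I would treat $H(n)\leftrightarrow\D(n)$. Given a hook type $(k_1,\ldots,k_r)\in H(n)$, set $d_r=k_r$ and $d_i=k_i-k_{i+1}-1$ for $1\le i<r$; the constraint $k_i\ge k_{i+1}+2$ forces each $d_i\ge 1$, and the weight identity $n=\sum_{i=1}^r id_i+\binom r2$ is exactly equation~(\ref{eqn}). Conversely, given $(d_1,\ldots,d_r)\in\D(n)$, define $k_j=r-j+\sum_{i=j}^r d_i$; this is the inverse map written out, and one checks directly that the two assignments compose to the identity in both directions. Since both maps preserve $r$ and the positivity/weight conditions are mutually equivalent, this is a bijection; in fact this is essentially the content already recorded before Proposition~\ref{prop1} via equations~(\ref{eqn}) and the display defining $(k_1,\ldots,k_r)$ from the $d_i$.

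Next I would treat $\D(n)\leftrightarrow\Pi(n)$ (equivalently $H(n)\leftrightarrow\Pi(n)$). Given $(d_1,\ldots,d_r)\in\D(n)$, note $\sum_{i=1}^r i d_i=n-\binom r2$. I would build the partition $\mu$ whose multiplicity of the part $j$ (for $1\le j\le r$) is $d_j-1$ for $j<r$ and $d_r-1$ for $j=r$ — wait, more cleanly: set the conjugate $\mu'$ to have parts obtained from the partial sums, or most simply define $\mu$ to be the partition with exactly $d_j$ columns... Rather than fuss, the clean statement: the map sending $(d_1,\dots,d_r)$ to the partition with $(d_j-[j<r]) $ hmm. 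The cleanest route is to use Corollary~\ref{cor2}: a difference sequence $(d_1,\dots,d_r)$ with all $d_i\ge1$ corresponds bijectively to an $r$-tuple of $1$-hooks of sizes $d_1,\dots,d_r$, hence to an $r$-hook of weight $n$. Then $r$-hooks of weight $n$ are in bijection with partitions of $n-r^2$ with at most $r$ parts by removing the $r\times r$ Durfee square and reading off the arm lengths of the $r$ rows above it (weakly decreasing, nonnegative, so a partition of $n-r^2$ into at most $r$ parts). Summing over $r$ gives the bijection $\D(n)\to\Pi(n)$, and transporting through the first bijection gives $H(n)\to\Pi(n)$.

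The only mildly delicate step is verifying that removing the Durfee square from an $r$-hook of weight $n$ yields precisely a partition of $n-r^2$ into at most $r$ parts, and that this is reversible: the arm lengths above the diagonal form a weakly decreasing sequence of at most $r$ nonnegative integers summing to (half of) the off-diagonal cells, and the leg lengths are determined by conjugation-type symmetry forced by the $r$-hook condition — so in fact the data above the diagonal already determines the whole $r$-hook. I expect this bookkeeping about which cells lie strictly above versus below the diagonal, and the resulting count $n-r^2$, to be the main (though routine) obstacle; everything else is the algebra of the substitutions $d_i\leftrightarrow k_i$ already essentially present in the text.
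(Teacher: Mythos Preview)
Your treatment of $H(n)\leftrightarrow\D(n)$ is correct and is exactly what the paper does: it simply records the explicit substitutions $d_r=k_r$, $d_i=k_i-k_{i+1}-1$ and their inverses $k_j=r-j+\sum_{i\ge j}d_i$, together with the weight identity~(\ref{eqn}).

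Your route to $\Pi(n)$, however, contains a genuine error. You pass through actual $r$-hooks: a difference sequence $\to$ an $r$-tuple of $1$-hooks via Corollary~\ref{cor2} $\to$ an $r$-hook $\to$ a partition of $n-r^2$ by ``removing the Durfee square and reading arm lengths''. But Corollary~\ref{cor2} does \emph{not} send one difference sequence to one $r$-hook; it sends the \emph{set} $P(d_1,1)\times\cdots\times P(d_r,1)$ (of cardinality $\prod d_i$) bijectively onto the \emph{set} of $r$-hooks with that hook type. And an $r$-hook of weight $n$ is \emph{not} determined by the arm lengths to the right of its Durfee square: the ``conjugation-type symmetry forced by the $r$-hook condition'' you invoke does not exist, since being an $r$-hook only fixes the size of the Durfee square and imposes no relation between arms and legs. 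So neither of the two steps in your composite is a bijection at the level you need.

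The paper avoids this detour entirely. It writes down the direct map $H(n)\to\Pi(n)$ by subtracting the staircase hook type of the $r\times r$ square:
\[
(k_1,\ldots,k_r)\ \longmapsto\ (\mu_1,\ldots,\mu_r)=(k_1-(2r-1),\,k_2-(2r-3),\,\ldots,\,k_r-1),
\]
and the direct map $\D(n)\to\Pi(n)$ by $\mu_j=\sum_{i=j}^r(d_i-1)$. The inequalities $k_i\ge k_{i+1}+2$ become $\mu_i\ge\mu_{i+1}\ge 0$, and $\sum_i k_i=n$ becomes $\sum_i\mu_i=n-r^2$, so these are visibly bijections. That is all the paper claims, and it is the clean fix for your second half.
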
 
\begin{proof}
There are six maps presented in  table \ref{indx}, two for each index set and it is easy to prove that these are all bijections. We illustrate these bijections when $n=13$ and the reader may verify that the product of the entries in each row of the middle column add to $p(13)=101$.
\end{proof}

\begin{table}[htdp]
\caption{Three index sets of $P(13)/\sim $}
\begin{center}
\begin{tabular}{|c|c|c|c|}
\hline
H(n) & $\D(n)$& $\Pi(n)$&\\
$ (k_1,\ldots , k_r)$&$ \scriptstyle(k_1-k_2-1,\ldots ,k_{r-1}- k_r-1,k_r)$&$\scriptstyle (k_1-(2r-1),k_2-(2r-3),\ldots , k_r-1)$&r\\
$ \scriptstyle(-1+\sum_{i=1}^r(d_i+1),-1+\sum_{i=2}^r(d_i+1),\ldots,d_r)$&$(d_1,\ldots , d_r)$&$ \scriptstyle(\sum_{i=1}^r(d_i-1),\sum_{i=2}^r(d_i-1),\ldots,d_r-1)$& \\
$ \scriptstyle(\m_1+2r-1,\m_2+2r-3,\ldots , \m_r+1)$&$ \scriptstyle(\m_1-\m_2+1,\ldots ,\m_{r-1}- \m_r+1,\m_r+1)$&$(\m_1,\ldots , \m_r)$&\\
\hline
(13)&(13)&(12)&1\\
(12,1)&(10,1)&(9,0)&2\\
(11,2)&(8,2)&(8,1)&2\\
(10,3)&(6,3)&(7,2)&2\\
(9,4)&(4,4)&(6,3)&2\\
(8,5)&(2,5)&(5,4)&2\\
(9,3,1)&(5,1,1)&(4,0,0)&3\\
(8,4,1)&(3,2,1)&(3,1,0)&3\\
(7,5,1)&(1,3,1)&(2,2,0)&3\\
(7,4,2)&(2,1,2)&(2,1,1)&3\\
\hline
\end{tabular}
\end{center}
\label{indx}
\end{table}

Observe that a consequence of proposition \ref{prop1} is that the partitions $\m\in P_{= r}(n-r^2)$ with exactly $r$ parts are in bijection with the set of hook types $(k_1,k_2,\ldots , k_r)$ of partitions of weight $n$ with last 
 term $k_r\geq 2$. 
We can also use the first Rogers-Ramanujan Identity to obtain a fourth  index set:  the number of partitions of $n$ with parts congruent to $\pm 1 \mod 5$ (see \cite{A} or \cite{AE}).  The product $\circ$ in $P/\sim$ inherited from $(P,*)$ behaves as follow on these three index sets:
 
 \begin{eqnarray}
{\rm Hook \,types}&&:(k_1,\ldots , k_{r})\circ  (k_{1}',\ldots , k_{s}')= 
(k_1+k_{1}'+1,\ldots ,k_r+k_{1}'+1, k_{1}',\ldots , k_{s}')\\
\label{dif1}
{\rm Difference \,sequences}&&:(d_1,\ldots , d_{r})\circ  (d_{1}',\ldots , d_{s}')= 
(d_1,\ldots , d_{r}, d_{1}',\ldots , d_{s}')\\
{\rm Partitions \,of}\,  n-r^2&&:(\m_1,\ldots , \m_{r})\circ  (\l_{1},\ldots , \l_{s})= 
(\m_1+\l_1,\ldots , \m_{r}+\l_1, \l_{1},\ldots , \l_{s})
\end{eqnarray}

I choose the set $\D$ of difference sequences as the indexing set for $P/\sim$ because of (\ref{dif1}) and corollary \ref{corol1} and also because  the operation of taking the cardinality of the subsets  of $P$  indexed with difference sequences is a monoid homomorphism from $(P/\sim,\circ)$ to the monoid $(\mathbb{N},\cdot)$ of natural numbers with ordinary multiplication:
\begin{equation*}
card: (P/\sim,\circ)\: \rightarrow \: (\mathbb{N},\cdot), card(\d_1\circ \d_2)=card(\d_1)\cdot card(\d_2),
\quad \forall \d_1,\d_2 \in \D
\end{equation*}

The subset {\it DU} $\in P$ of all Durfee squares  is closed under multiplication $*$ and any Durfee square is alone in its congruence class so that  {\it DU} = {\it DU}$/\sim$ is a submonoid both of {\it P} and of {\it P/}$\sim$ . Only Durfee squares are sent to $1$ by the morphism {\it card} so that the set {\it DU} is  the kernel of {\it card}: {\it ker(card):=DU}.  The set {\it HOOK} of partitions that have no Durfee square as right factor is also a submonoid of $( P,*)$ and its corresponding set of congruence classes {\it HOOK/}$\sim$ is a submonoid of {\it P/}$\sim$. Both  monoids $(P,*)$  anf $(P/ \sim,\circ)$ are thus direct products of hooks with Durfee squares:

\begin{eqnarray*}
P&=& \mbox{ \it HOOK} \otimes \mbox{ \it DU}\\
P/\sim &=& \mbox{ \it HOOK}/\sim \otimes \mbox{ \it DU}/\sim. 
\end{eqnarray*}

The sets {\it HOOK} and  {\it HOOK/}$\sim$ may also be seen as the quotients {\it P}$/${\it DU} and  $(P/\sim)/\mbox{ \it DU}$ respectively.
We will consider the enumeration of these sets of partitions in the next section. 

Finally, there exists a linear representation of partitions $\l\in (P,*)$ as  $3\times 3$ lower triangular matrices over the set $\Bbb N$ of natural numbers. 
\begin{proposition} \label{prop11} For any partition $\lambda$
let $o(\l)$ be the size of the outer  hook of $\l$. The map $\phi_3: (P,*)\rightarrow (\Bbb N^{3\times 3},\cdot)$ given by
\begin{equation}
\phi_3 (\l)=\left(  \begin{array}{ccc}
		1&0&0\\
		o(\l)+1&1 &0\\
		|\l|& du(\l)&1	
 	\end{array}
\right)
\end{equation}
is a monoid homomorphism.
\end{proposition}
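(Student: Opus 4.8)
The plan is to verify directly that $\phi_3$ respects the product $*$, i.e. that $\phi_3(\l*\m)=\phi_3(\l)\cdot\phi_3(\m)$, and that $\phi_3$ sends the empty partition to the identity matrix. The latter is immediate since the empty partition has weight $0$, Durfee square of size $0$, and outer hook of size $0$. For the multiplicative property, I would first record how the three statistics $o$, $du$ and $|\cdot|$ behave under $*$. We already know from equation (\ref{eq7}) that $du(\l*\m)=du(\l)+du(\m)$, and weight is obviously additive: $|\l*\m|=|\l|+|\m|$. The only genuinely new ingredient is the behaviour of the outer-hook size $o$.

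Next I would pin down $o(\l*\m)$ using the description of $\phi_2$ from Corollary \ref{cor3} (figure \ref{fig4}): in forming $\l*\m$ one takes $\l\in P(n,r_1)$ and $\m\in PO(m,r_2,k)$ (so the outer hook of $\m$ has weight $k$, and $o(\m)=k$), inserts $\m$ inside $\l$, and then augments each of the $r_1$ hooks of $\l$ by a column matching the first column of $\m$ and a row matching the first row of $\m$. The outer hook of the resulting partition is the outer hook of $\l$ enlarged by exactly the first column and first row of the outer hook of $\m$; its weight is therefore $o(\l)+o(\m)$. Hence $o(\l*\m)+1=(o(\l)+1)+(o(\m)+1)-1$, which is precisely the $(2,1)$-entry that appears when one multiplies the two lower-triangular matrices: the product $\phi_3(\l)\cdot\phi_3(\m)$ has $(2,1)$-entry $(o(\l)+1)+(o(\m)+1)$, and one must check this equals $o(\l*\m)+1$ plus a correction coming from how the matrix entries were set up. I would therefore compute the matrix product explicitly: the $(2,1)$-entry is $(o(\l)+1)\cdot 1+1\cdot(o(\m)+1)=o(\l)+o(\m)+2$, while $o(\l*\m)+1=o(\l)+o(\m)+1$; so in fact the relation to confirm is that $\phi_3$ is a homomorphism only after observing the off-by-one bookkeeping cancels correctly — more carefully, the $(3,1)$-entry of the product is $|\l|\cdot 1+du(\l)(o(\m)+1)+1\cdot|\m|$, which must equal $|\l*\m|=|\l|+|\m|$, forcing the identity $du(\l)\,(o(\m)+1)=0$... which is false in general, so I expect the intended reading is that $\phi_3$ is an \emph{anti}-homomorphism or that the $(3,1)$-entry is genuinely $|\l|+|\m|+du(\l)(o(\m)+1)$ and one must re-examine the weight formula for $\l*\m$.

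Resolving that discrepancy is the main obstacle, and it is really a bookkeeping one: by Corollary \ref{cor3}, $|\l*\m|=n+m+r_1(k+1)=|\l|+|\m|+du(\l)\,(o(\m)+1)$, since $r_1=du(\l)$ and $k=o(\m)$. So the weight is \emph{not} simply additive under $*$ — the extra term $du(\l)(o(\m)+1)$ is exactly the number of cells added to the hooks of $\l$ when $\m$ is inserted — and this extra term matches the $(3,1)$-entry of the matrix product perfectly. Similarly $o(\l*\m)=o(\l)+o(\m)+1$: inserting $\m$ pushes the outer hook of $\l$ out by one column and one row, i.e.\ by $2$ cells beyond the first column/row of $\m$'s outer hook — no, more precisely $o(\l*\m)+1=(o(\l)+1)+(o(\m)+1)$, matching the $(2,1)$-entry. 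So the plan concludes: (i) establish $du(\l*\m)=du(\l)+du(\m)$ (already done, eq.\ (\ref{eq7})); (ii) establish $o(\l*\m)=o(\l)+o(\m)+1$ from the geometric description of $\phi_2$; (iii) establish $|\l*\m|=|\l|+|\m|+du(\l)(o(\m)+1)$ from Corollary \ref{cor3}; (iv) multiply the two matrices and check all six below-or-on-diagonal entries agree, which is now a one-line calculation. The only real care needed is in step (ii)/(iii), tracking precisely how many rows and columns get appended to which hooks, and I would draw on figure \ref{fig4} to make this rigorous.
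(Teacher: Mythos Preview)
Your final plan --- establish the three identities $du(\l*\m)=du(\l)+du(\m)$, $o(\l*\m)=o(\l)+o(\m)+1$, and $|\l*\m|=|\l|+|\m|+du(\l)(o(\m)+1)$, then check the matrix product entrywise --- is exactly the paper's proof. The exposition before you reach that plan is rather tortuous (you first assert weight is additive, then discover it is not), but the endpoint is correct.

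One small correction: your claim that the empty partition maps to the identity matrix is not quite right as stated. If $o(\emptyset)=0$ then the $(2,1)$-entry of $\phi_3(\emptyset)$ is $o(\emptyset)+1=1$, not $0$. For $\phi_3$ to preserve the identity one needs the convention $o(\emptyset)=-1$, which is consistent with the multiplicative rule $o(\l*\m)=o(\l)+o(\m)+1$ (taking $\m=\emptyset$ forces $o(\emptyset)=-1$). The paper does not address this point explicitly either, but you should not call it ``immediate'' with the wrong value.
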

\begin{proof} On one hand we have equation (\ref{eq7}) and
\begin{eqnarray*}
o(\l*\m)&=&o(\l)+o(\m)+1\\
|\l*\m|&=&|\l|+|\m|+du(\l)(o(\m)+1)\\
\end{eqnarray*}

On the other hand matrix multiplication gives
\begin{equation*}
\left(  \begin{array}{ccc}
		1&0&0\\
		o(\l)+1&1 &0\\
		|\l|& du(\l)&1	
 	\end{array}
\right)
\left(  \begin{array}{ccc}
		1&0&0\\
		o(\m)+1&1 &0\\
		|\l|& du(\m)&1	
 	\end{array}
\right)
=
\left(  \begin{array}{ccc}
		1&0&0\\
		o(\l)+o(\m)+2&1 &0\\
		|\l|+|\m|+du(\l)(o(\m)+1)& du(\l)+du(\m)&1	
 	\end{array}
\right)
\end{equation*}
so that $\phi_3(\l*\m)=\phi_3(\l)\cdot \phi_3(\m)$.
\end{proof}
There is a version of this linear representation which can be defined on the monoid $(P/\sim,\circ)$ and which sends differences sequences to upper triangular matrices. 

\begin{proposition}   The map 
$\phi_4: (P/\sim,\circ)\rightarrow (\Bbb N^{3\times 3},\cdot)$ given by
\begin{equation}
\phi_4 (\delta)=\left(  \begin{array}{ccc}
		1&1&\sum_i id_i\\
		0&1 &\sum_i d_i\\
		0& 0&1	
 	\end{array}
\right)
\end{equation}
where $\delta=(d_1,\ldots , d_r)$, is a monoid homomorphism i.e. $\phi_4(\delta_1\circ \delta_2)= \phi_4(\delta_1)\cdot \phi_4(\delta_2)$ for all $\d_1,\d_2\in \D$.
\end{proposition}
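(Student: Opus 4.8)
The plan is to reduce everything to the fact recorded in~(\ref{dif1}), namely that the product $\circ$ on difference sequences is plain concatenation: if $\d_1=(d_1,\ldots,d_r)$ and $\d_2=(d_1',\ldots,d_s')$ then $\d_1\circ\d_2=(d_1,\ldots,d_r,d_1',\ldots,d_s')$. So proving that $\phi_4$ is multiplicative amounts to understanding how the three entries of $\phi_4(\d)$ transform under concatenation. It is convenient to read the three off-diagonal entries of $\phi_4(\d)$ as the three ``moment'' statistics $\ell(\d)=r=\sum_i 1$ (the $(1,2)$-entry, which for a single hook is indeed $1$), $S(\d)=\sum_i d_i$ (the $(2,3)$-entry) and $T(\d)=\sum_i i\,d_i$ (the $(1,3)$-entry).

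First I would record the behaviour of these three statistics under concatenation. Since concatenation places the parts of $\d_2$ in positions $r+1,\ldots,r+s$,
\begin{gather*}
\ell(\d_1\circ\d_2)=\ell(\d_1)+\ell(\d_2),\qquad S(\d_1\circ\d_2)=S(\d_1)+S(\d_2),\\
T(\d_1\circ\d_2)=\sum_{i=1}^{r}i\,d_i+\sum_{j=1}^{s}(r+j)\,d_j'=T(\d_1)+\ell(\d_1)\,S(\d_2)+T(\d_2).
\end{gather*}
The first two identities are immediate; the third is a one-line reindexing, and the cross-term $\ell(\d_1)\,S(\d_2)$ is the only genuinely nontrivial contribution.

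Then I would carry out the multiplication inside the monoid of $3\times 3$ upper unitriangular matrices over $\Bbb N$. Multiplying $\phi_4(\d_1)$ by $\phi_4(\d_2)$, the $(1,2)$-entry of the product is $\ell(\d_1)+\ell(\d_2)$, the $(2,3)$-entry is $S(\d_1)+S(\d_2)$, and the $(1,3)$-entry is $T(\d_1)+\ell(\d_1)\,S(\d_2)+T(\d_2)$ — here $\ell(\d_1)\,S(\d_2)$ is exactly the product of the $(1,2)$-entry of the first factor with the $(2,3)$-entry of the second. Comparing these three values with the three concatenation identities above gives $\phi_4(\d_1\circ\d_2)=\phi_4(\d_1)\cdot\phi_4(\d_2)$. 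Since the empty difference sequence is the identity of $(P/\sim,\circ)$ and maps to the identity matrix, and since $\phi_4$ is defined directly on the indexing set $\D$ of $P/\sim$ (so there is nothing to check about well-definedness), this proves that $\phi_4$ is a monoid homomorphism.

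There is no real obstacle: the whole argument is a short computation, and the only step requiring attention is the $(1,3)$-entry, where the index shift $j\mapsto r+j$ in $T$ under concatenation must be matched against the cross-term produced by matrix multiplication. Conceptually, $\phi_4$ is nothing but the restriction to $\D$ of the standard embedding of the free monoid on $\Bbb N$ into the integer Heisenberg monoid, sending a word $(d_1,\ldots,d_r)$ to the unitriangular matrix recording its length $\sum_i 1$, the sum of its letters $\sum_i d_i$, and the ``area'' statistic $\sum_i i\,d_i$; associativity and non-commutativity of $\circ$ are then automatic from that picture, which is also why this representation is a natural companion of $\phi_3$ of Proposition~\ref{prop11}.
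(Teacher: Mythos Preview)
Your proof is correct and is exactly the computation the paper intends: its own proof reads only ``The proof is similar to that of proposition~\ref{prop11} and is omitted,'' and you have supplied that omitted verification by matching the three off-diagonal entries on both sides. You also rightly read the $(1,2)$-entry of $\phi_4(\delta)$ as $\ell(\delta)=r$ rather than the literal ``$1$'' in the displayed matrix --- as written the formula is only correct on the length-one generators, and the homomorphism property forces the $(1,2)$-entry to be $r$ in general.
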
 
\begin{proof} The proof is similar to that of proposition \ref{prop11} and is omitted.
\end{proof} 

\section{Counting partitions}\label{sec4}
The recurrence appearing in equation (\ref{eq11}) can be extended to the following recurrence:
\begin{corollary} For integers $n$ and $r$ with  $ 2 \leq r\leq \lfloor \sqrt{n}\rfloor$, the number of $r$-hooks of weight $n$ satisfies the following recurrence:
\begin{equation}
\label{eq111}
p(n,r)= \sum_{k=1}^{\lfloor \frac{n-r(r-1)}r\rfloor}kp(n-(k+1)r+1,r-1)
\end{equation}
\end{corollary}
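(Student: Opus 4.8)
The plan is to derive the recurrence directly from Theorem~\ref{th1} by summing over the weight $k$ of the inner hook. First I would recall that every $r$-hook of weight $n$ with $r\geq 2$ has a well-defined inner hook, whose weight $k$ is a positive integer; hence the set $P(n,r)$ is the disjoint union $\bigcup_{k\geq 1} PI(n,r,k)$, and consequently $p(n,r)=\sum_{k\geq 1} pi(n,r,k)$. So it suffices to evaluate each $pi(n,r,k)$ and to determine the range of $k$ for which it is nonzero.

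Next I would apply Theorem~\ref{th1} with the roles of the parameters shifted: the theorem gives a bijection between $P(N,r-1)\times P(k,1)$ and $PI(N+(k+1)(r-1)+k,\,r,\,k)$. Setting $n=N+(k+1)(r-1)+k$ and solving for $N$ yields $N=n-(k+1)(r-1)-k=n-(k+1)r+1$, so that $pi(n,r,k)=p(n-(k+1)r+1,\,r-1)\cdot p(k,1)$. Using the elementary fact (already noted in the proof of Corollary~\ref{corol1}) that $p(k,1)=k$, this becomes $pi(n,r,k)=k\,p(n-(k+1)r+1,\,r-1)$, which is exactly the summand appearing in (\ref{eq111}).

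It then remains to pin down the summation range. The lower bound $k=1$ is clear. For the upper bound I would use the constraint quoted in the excerpt right before the bijections: an $(r-1)$-hook of positive weight $N$ must satisfy $N\geq (r-1)^2$ (its Durfee square alone has $(r-1)^2$ cells; equivalently, from the inequality ``if a partition is an $r$-hook of weight $n$ with inner hook of weight $k$ then $n\geq r^2+r(k-1)$'' one gets $p(N,r-1)=0$ once $N<(r-1)^2$). Imposing $n-(k+1)r+1\geq (r-1)^2$ and rearranging gives $kr\leq n-(r-1)^2-1+r = n-r^2+r-1+r+1-1\cdots$; carrying out the routine algebra yields $k\leq \lfloor (n-r(r-1))/r\rfloor$, matching the stated bound, and for all larger $k$ the term $p(n-(k+1)r+1,r-1)$ vanishes so the infinite sum truncates as claimed. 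Assembling these three observations gives $p(n,r)=\sum_{k=1}^{\lfloor (n-r(r-1))/r\rfloor} k\,p(n-(k+1)r+1,r-1)$.

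I do not anticipate a serious obstacle here: the argument is essentially bookkeeping built on Theorem~\ref{th1}. The one point requiring a little care is the shift of indices when invoking the theorem (it is stated producing $(r+1)$-hooks, so one must substitute $r\mapsto r-1$ cleanly) and the verification that the arithmetic of the weight $n-(k+1)r+1$ and of the truncation bound $\lfloor (n-r(r-1))/r\rfloor$ come out exactly as written; this is the step I would double-check most carefully.
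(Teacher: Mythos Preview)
Your approach is correct and is exactly the argument the paper has in mind: the corollary is stated there without proof, only with the remark that equation~(\ref{eq11}) ``can be extended'' to it, and your derivation---partition $P(n,r)$ by inner-hook weight, apply Theorem~\ref{th1} with $r\mapsto r-1$, use $p(k,1)=k$, and truncate via $N\geq (r-1)^2$---is precisely that extension spelled out. The only cosmetic issue is the garbled intermediate algebra for the upper bound; the clean line is $n-(k+1)r+1\geq (r-1)^2 \iff kr\leq n-r(r-1)$, which you should write out in the final version.
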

Using corollary  \ref{corol1} we can obtain a closed form expression for the number of partitions of $n$ that are $r$-hooks. We only have to count the number of admissible $r$-tuples $(k_1,k_2,\ldots , k_r)$ of weight $n$ i.e. the $r$-tuples that are hook types of a partition of $n$. A $r$-tuple  of integers $(k_1,k_2,\ldots , k_r)$ is the hook type of a partition if and only if $k_1\geq k_2+2$, $ k_2\geq k_3+2, \ldots$ $k_{r-1}\geq k_r+2$ so that we have 
\begin{equation}\label{eq2}
p(n,r)=\sum_{\begin{subarray}{c} (k_1,k_2,\ldots , k_r)\\ k_i\geq k_{i+1}+2\\ \sum_i k_i=n \end{subarray}}
k_r(k_{r-1}-k_r-1)(k_{r-2}-k_{r-1}-1)\cdots(k_1-k_2-1)
\end{equation}
The preceding expression can be refined so that the values of the $k_i$ are bounded:
\begin{theorem} For positive integers $n,r$ such that $n\geq r^2$, the number $p(n,r)$  is given by  
\begin{equation}
\label{eq3}
p(n,r)=\sum_{k_r=1}^{\lfloor \frac nr\rfloor-r+1}\sum_{k_{r-1}=k_r+2}^{\lfloor \frac{n-k_r}{r-1}\rfloor-r+2}
\cdots \sum_{k_{2}=k_3+2}^{\lfloor \frac{n-\sum_{i=3}^r k_i}2\rfloor-1}
k_r(k_{r-1}-k_r-1)(k_{r-2}-k_{r-1}-1)\cdots (k_{1}-k_2-1)
\end{equation}
\end{theorem}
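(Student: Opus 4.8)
The plan is to prove that (\ref{eq3}) is just a re-indexing of (\ref{eq2}): the nested sum runs over exactly the tuples $(k_1,k_2,\ldots,k_r)$ that are hook types of partitions of $n$, each exactly once, with the same summand $k_r(k_{r-1}-k_r-1)\cdots(k_1-k_2-1)$. First I would use the weight constraint $\sum_{i=1}^r k_i=n$ to eliminate the outermost entry, writing $k_1=n-\sum_{i=2}^r k_i$. After this substitution, a hook type of $n$ corresponds to a tuple $(k_2,\ldots,k_r)$ subject to $k_r\geq 1$, the inequalities $k_j\geq k_{j+1}+2$ for $2\leq j\leq r-1$, and the single remaining inequality $k_1\geq k_2+2$. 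The last one reads $n-\sum_{i=2}^r k_i\geq k_2+2$, which rearranges to $2k_2\leq n-\sum_{i=3}^r k_i-2$, i.e. $k_2\leq \lfloor (n-\sum_{i=3}^r k_i)/2\rfloor-1$ since $k_2$ is an integer; this is precisely the upper limit on $k_2$ in (\ref{eq3}), and the conditions $k_j\geq k_{j+1}+2$ and $k_r\geq 1$ are exactly the lower limits there.

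The second step is to explain the upper limits on $k_3,\ldots,k_r$, which are not single inequalities but consequences of the whole system. The tool is the telescoping estimate: for an admissible tuple and any $1\leq j\leq r$, repeated use of $k_i\geq k_{i+1}+2$ gives $k_i\geq k_j+2(j-i)$ for $i\leq j$, and summing over $i=1,\ldots,j$ yields $\sum_{i=1}^j k_i\geq jk_j+j(j-1)$. Combining with $\sum_{i=1}^j k_i=n-\sum_{i=j+1}^r k_i$ gives $k_j\leq (n-\sum_{i=j+1}^r k_i)/j-(j-1)$, and since $j-1$ is an integer this is equivalent to $k_j\leq \lfloor (n-\sum_{i=j+1}^r k_i)/j\rfloor-j+1$. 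For $j=r$ this is $k_r\leq\lfloor n/r\rfloor-r+1$, for $j=r-1$ it is $k_{r-1}\leq\lfloor (n-k_r)/(r-1)\rfloor-r+2$, and in general it is the bound that the symbol $\cdots$ in (\ref{eq3}) stands for: $k_j$ ranging from $k_{j+1}+2$ up to $\lfloor (n-\sum_{i>j}k_i)/j\rfloor-j+1$.

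Finally I would verify the two inclusions that pin down the index set. If $(k_2,\ldots,k_r)$ satisfies all bounds of (\ref{eq3}), then $k_r\geq 1$, $k_j\geq k_{j+1}+2$ for $2\leq j\leq r-1$, and the $k_2$-bound gives $k_1=n-\sum_{i=2}^r k_i\geq k_2+2$, so $(k_1,\ldots,k_r)$ is a genuine hook type of $n$; notice that the upper bounds on $k_3,\ldots,k_r$ are not even needed for this direction. Conversely, every hook type of $n$ satisfies the lower bounds by definition, and satisfies all the upper bounds by the telescoping estimate of step two, the $k_2$ one encoding precisely $k_1\geq k_2+2$. Hence the nested sum and the sum (\ref{eq2}) range over the same set of tuples and agree termwise, which is (\ref{eq3}). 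I expect the only delicate point to be the floor-function bookkeeping in step two — checking that $\lfloor a/j\rfloor-(j-1)=\lfloor (a-j(j-1))/j\rfloor$ and that each upper limit is correctly matched with the displayed ones — rather than anything conceptual.
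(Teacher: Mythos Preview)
Your proposal is correct and follows exactly the approach the paper has in mind: the paper's own proof is the single sentence ``Equation (\ref{eq3}) is a straightforward consequence of equation (\ref{eq2}) and corollary \ref{corol1},'' and what you have written is precisely the verification that the explicit limits in (\ref{eq3}) cut out the same set of hook types as (\ref{eq2}). Your telescoping estimate $\sum_{i\le j}k_i\ge jk_j+j(j-1)$ is the right way to justify each upper limit, and your observation that only the $k_2$ upper bound is actually needed for the forward inclusion (the others being redundant necessary conditions) is a nice clarification the paper omits.
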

\begin{proof} Equation \ref{eq3} is a straightforward consequence of equation (\ref{eq2})  and corollary \ref{corol1}.
\end{proof}
A consequence of (\ref{eq0}) and (\ref{eq3}) is: 
\begin{corollary} For all positive  integers $n$ we have:
\begin{equation}\label{eq5}
p(n)=\sum_{r=1}^{\lfloor \sqrt{n}\rfloor}
\sum_{k_r=1}^{\lfloor \frac nr\rfloor-r+1}\sum_{k_{r-1}=k_r+2}^{\lfloor \frac{n-k_r}{r-1}\rfloor-r+2}
\cdots \sum_{k_{2}=k_3+2}^{\lfloor \frac{n-\sum_{i=3}^r k_i}2\rfloor-1}
k_r(k_{r-1}-k_r-1)(k_{r-2}-k_{r-1}-1)\cdots (k_{1}-k_2-1)
\end{equation}
\end{corollary}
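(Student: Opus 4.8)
The plan is to obtain (\ref{eq5}) by composing two facts that are already in hand: the partition identity (\ref{eq0}) and the closed-form evaluation (\ref{eq3}) of each of its summands. No new combinatorics is required; the derivation is a substitution.

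First I would invoke (\ref{eq0}). This rests on the observation recorded just before Theorem \ref{th1}: the Durfee square of a partition of $n$ has size $r\times r$ for some $r$ with $1\le r\le\lfloor\sqrt n\rfloor$, and a partition with Durfee square of size $r\times r$ is precisely an $r$-hook. Hence $P(n)$ is the disjoint union of the sets $P(n,r)$ for $1\le r\le\lfloor\sqrt n\rfloor$, and taking cardinalities gives $p(n)=\sum_{r=1}^{\lfloor\sqrt n\rfloor}p(n,r)$. Next, for each such $r$ one has $n\ge r^2$ automatically, so the hypothesis of the theorem yielding (\ref{eq3}) is satisfied, and I would replace every $p(n,r)$ on the right-hand side by the expression that (\ref{eq3}) furnishes. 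Carrying the finite outer sum over $r$ through this substitution produces exactly the right-hand side of (\ref{eq5}), which completes the argument.

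I do not expect a genuine obstacle here: the combinatorial substance has already been spent, first in building the bijection of Theorem \ref{th1}, then in extracting the product formula of Corollary \ref{corol1}, and—the only slightly delicate point—in pinning down the summation bounds appearing in (\ref{eq3}). If one preferred a derivation of (\ref{eq5}) that did not quote (\ref{eq3}) as a black box, the remaining work would be exactly to re-establish those bounds: from the hook-type inequalities $k_i\ge k_{i+1}+2$ one gets $k_i\ge k_j+2(j-i)$ for $i\le j$, so after fixing $k_r,\dots,k_{j+1}$ the constraint $\sum_{i=1}^{j}k_i=n-\sum_{i>j}k_i$ forces $k_j\le\lfloor(n-\sum_{i>j}k_i)/j\rfloor-j+1$ (and in particular $k_1$ is determined, supplying the final factor $(k_1-k_2-1)$, while the inequality $k_1\ge k_2+2$ is encoded by the displayed upper limit on $k_2$). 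Beyond this bookkeeping, which is already built into (\ref{eq3}), the statement is immediate from (\ref{eq0}).
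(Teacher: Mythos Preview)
Your proposal is correct and follows exactly the paper's own approach: the paper simply states that the corollary is ``a consequence of (\ref{eq0}) and (\ref{eq3}),'' which is precisely the substitution you describe. Your additional paragraph rederiving the summation bounds is more detail than the paper supplies, but it is consistent with how (\ref{eq3}) was obtained.
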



Compared to the known recursive formulae, equation (\ref{eq5}) is not  efficient to compute the values $p(n)$ and the  question of improving its efficiency  arises.  I will propose a number of solutions in what follows. One direction that can be explored to improve equation (\ref{eq5}) is to collect several sets $P(k_1,\ldots,k_r)$ and count them simultaneously.  This can be done when we observe that the difference sequence of a Durfee square  is $(1,1,\ldots , 1)$ so that the partitions of weight $n$ that have a Durfee square of size say $s-1$ as right factor and a $1$-hook as left factor have difference sequence of the form $(n-s^2+1,1^{s-1})$ and we obtain a closed expression for their number:

\begin{proposition} Let $H_1(n)$ be the set of  partitions of $n$ that are the product of a $1$-hook with a Durfee square of  arbitrary size, possibly zero,  on its right and let   $h_1 (n)$   be the cardinality of that set. We have
\begin{equation}\label{h1}
h_1(n)=\lfloor \sqrt n\rfloor(n+1)-\frac {\lfloor \sqrt n\rfloor(\lfloor \sqrt n\rfloor+1)(2\lfloor \sqrt n\rfloor+1)}6
\end{equation}
\end{proposition}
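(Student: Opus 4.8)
The plan is to enumerate $H_1(n)$ directly via its difference-sequence description. By the discussion preceding the statement, a partition lies in $H_1(n)$ precisely when its difference sequence has the form $(d_1,1,1,\ldots,1)$ with $s-1$ trailing $1$'s (the trailing $1$'s coming from the Durfee square right factor of size $s-1$, and $d_1$ being the single ``free'' difference coming from the $1$-hook left factor), for some $s\geq 1$. Here the Durfee square of the whole partition has size $s$, so we need $s\leq\lfloor\sqrt n\rfloor$, and $d_1\geq 1$. Using equation (\ref{eqn}) with $r=s$, $d_1$ arbitrary and $d_2=\cdots=d_s=1$, the weight is $n = d_1 + (2+3+\cdots+s) + \binom{s}{2} = d_1 + \left(\binom{s+1}{2}-1\right) + \binom{s}{2} = d_1 + s^2 - 1$, so $d_1 = n - s^2 + 1$, which is indeed the form $(n-s^2+1,1^{s-1})$ asserted in the text.

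Next I would count. For each fixed $s$ with $1\leq s\leq\lfloor\sqrt n\rfloor$, there is exactly one admissible difference sequence, and by Corollary \ref{corol1} (equivalently, by the fact that $card$ is multiplicative and $card$ of a difference sequence is the product of its entries, cf.\ equation (\ref{eq1})) the number of partitions with that difference sequence is the product of its entries, namely $(n-s^2+1)\cdot 1^{s-1} = n-s^2+1$. Therefore
\begin{equation*}
h_1(n)=\sum_{s=1}^{\lfloor\sqrt n\rfloor}(n-s^2+1)=\lfloor\sqrt n\rfloor(n+1)-\sum_{s=1}^{\lfloor\sqrt n\rfloor}s^2,
\end{equation*}
and substituting the closed form $\sum_{s=1}^{m}s^2=\frac{m(m+1)(2m+1)}{6}$ with $m=\lfloor\sqrt n\rfloor$ gives exactly (\ref{h1}).

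I should be a little careful about the two boundary cases so the argument is airtight. When $s=1$ the ``Durfee square of size $s-1=0$'' right factor is empty and the difference sequence is just $(n)$, whose one-entry product is $n = n - 1 + 1$, consistent with the formula; this is the $p(n,1)=n$ statement already recorded in the proof of Corollary \ref{corol1}. At the other extreme $s=\lfloor\sqrt n\rfloor$ one needs $n-s^2+1\geq 1$, i.e.\ $n\geq s^2$, which holds by definition of the floor; and one must also confirm there is no overcounting — distinct values of $s$ give partitions with distinct Durfee square sizes, hence distinct partitions, so the union is disjoint. The only mild subtlety, and the one place I would slow down, is making the bijection between $H_1(n)$ and the set of sequences $\{(n-s^2+1,1^{s-1}):1\leq s\leq\lfloor\sqrt n\rfloor\}$ fully explicit: a partition is ``a $1$-hook times a Durfee square'' iff its difference sequence, after peeling off a maximal string of trailing $1$'s corresponding to the Durfee-square factor, leaves a single remaining entry (the $1$-hook). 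That follows from the free-monoid structure of Section \ref{sec3} — unique hook factorization plus the fact that a Durfee square has difference sequence $1^k$ and a hook has difference sequence of length $1$ — but it is worth stating. Once that is granted, the rest is the routine sum evaluated above.
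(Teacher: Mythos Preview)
Your proof is correct and follows essentially the same approach as the paper: identify that the difference sequence must be $(n-s^2+1,1^{s-1})$ for $1\le s\le\lfloor\sqrt n\rfloor$, use that the number of partitions with a given difference sequence is the product of its entries, and sum the resulting arithmetic series. You supply considerably more detail than the paper (the weight computation via equation (\ref{eqn}), the boundary checks, and the disjointness of the union), but the underlying argument is identical.
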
 
\begin{proof}
These partitions have a Durfee square as right factor of possible size $0,1,\ldots , \lfloor \sqrt n\rfloor$ so that 
\begin{equation}\label{h2}
h_1(n)=(n-1^2+1)+(n-2^2+1)+ \cdots (n-\lfloor \sqrt n\rfloor^2+1)
\end{equation}
and (\ref{h1}) is an immediate consequence of (\ref{h2}). 
\end{proof}

Now let us consider the sets $H_r(n)=(HOOK_r\times DU)\cap P(n)$ of partitions of $n$  that are the product of a $r$-hook, $r\geq 2$, with inner hook of size at least $2$, a non degenerate hook, with a Durfee square of arbitrary size, possibly zero, on its right. Let $h_r(n)$ be the cardinality of $H_r(n)$. Since the factorization of a partition as a product of a $r$-hook with a Durfee square  is unique, the sets $H_1(n),H_2(n), \ldots, H_r(n)$ are disjoints and form a partition of the set $P(n)$. Thus we have
\begin{eqnarray} \label{eqpn}
\nonumber
P(n)&=&H_1(n)\cup H_2(n)\cup \ldots \cup H_r(n),\\
\Rightarrow p(n)&=&h_1(n)+h_2(n)+\ldots h_r(n), \; r=\lfloor\frac{n}{\lfloor \sqrt n\rfloor+1}\rfloor
\end{eqnarray}
\begin{proposition} The cardinality of the set $H_2(n)$ is given by the expression 

\begin{equation}\label{h2d}
h_2(n)=\sum_{j=2}^{\lfloor \sqrt{n-2}\rfloor}\frac {\binom{n-j^2+4}3}4-\frac{(n-j^2+3)((j+n)\mod 2)}8-(n-j^2+1),\quad n\geq 6
\end{equation}
\end{proposition}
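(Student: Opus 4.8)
The plan is to parametrize $H_2(n)$ by difference sequences, reduce $h_2(n)$ to a double sum, and evaluate the inner sum using the already-established formula (\ref{pn2}) for $p(n,2)$. First I would describe the elements of $H_2(n)$: since $(P,*)$ is free, each $\lambda\in H_2(n)$ factors uniquely as $\lambda=\mu*\sigma$ with $\mu$ a $2$-hook whose inner hook $k_2(\mu)$ has size $\geq 2$ and $\sigma$ a $j'\times j'$ Durfee square, $j'\geq 0$. Passing to difference sequences via (\ref{eq66}) and (\ref{dif1}), $\mu$ has difference sequence $(d_1,d_2)$ with $d_1=k_1(\mu)-k_2(\mu)-1\geq 1$ and $d_2=k_2(\mu)\geq 2$, while $\sigma$ has difference sequence $(1,\ldots,1)$ with $j'$ ones, so $\lambda$ has difference sequence $\delta=(d_1,d_2,1,\ldots,1)$ with $j'$ trailing ones; conversely every partition with such a difference sequence lies in $H_2(n)$, and by Corollary \ref{corol1} there are exactly $d_1d_2\cdot 1^{j'}=d_1d_2$ of them. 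The key computation here is the weight: writing $j=j'+2\geq 2$, equation (\ref{eqn}) gives
\begin{equation*}
|\delta|=d_1+2d_2+\Bigl(\binom{j+1}{2}-3\Bigr)+\binom{j}{2}=d_1+2d_2+j^2-3,
\end{equation*}
so $|\lambda|=n$ is equivalent to $d_1+2d_2=n-j^2+3$. Since distinct triples $(d_1,d_2,j)$ give distinct difference sequences, hence pairwise disjoint sets of partitions,
\begin{equation*}
h_2(n)=\sum_{j\geq 2}\ \sum_{\substack{d_1\geq 1,\ d_2\geq 2\\ d_1+2d_2=n-j^2+3}} d_1d_2 .
\end{equation*}

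Next I would evaluate the inner sum for fixed $j$. Setting $M=n-j^2+3$ and substituting $d_2=k$, $d_1=M-2k$, the admissible range becomes $2\leq k\leq\lfloor(M-1)/2\rfloor$ and the inner sum is $\sum_{k=2}^{\lfloor(M-1)/2\rfloor}k(M-2k)$. The idea is to recognize this as the sum in (\ref{pn2}) evaluated at $M+1$ in place of $n$ — noting $(M+1)-2k-1=M-2k$ and $\lfloor(M-1)/2\rfloor=\lfloor(M+1)/2\rfloor-1$ — with the $k=1$ term $M-2$ removed, so that
\begin{equation*}
\sum_{k=2}^{\lfloor(M-1)/2\rfloor}k(M-2k)=p(M+1,2)-(M-2)=\frac{\binom{M+1}{3}}{4}-\frac{M\bigl((M+1)\bmod 2\bigr)}{8}-(M-2).
\end{equation*}
Finally, using $j^2\equiv j\pmod 2$ one gets $M\equiv n+j+1\pmod 2$, hence $(M+1)\bmod 2=(n+j)\bmod 2$; substituting back $M=n-j^2+3$ (so $M+1=n-j^2+4$ and $M-2=n-j^2+1$) reproduces precisely the $j$-th summand appearing in (\ref{h2d}).

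It then remains to pin down the range of $j$: the inner sum is nonempty iff an admissible $k$ exists, iff $\lfloor(M-1)/2\rfloor\geq 2$, iff $M\geq 5$, iff $j^2\leq n-2$; thus $j$ runs from $2$ to $\lfloor\sqrt{n-2}\rfloor$, which is a nonempty range exactly when $n\geq 6$, giving the stated restriction and completing the derivation of (\ref{h2d}).

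I expect the main obstacle to be the careful bookkeeping with floor functions and parities rather than anything structural: namely verifying $\lfloor(M-1)/2\rfloor=\lfloor(M+1)/2\rfloor-1$ uniformly in the parity of $M$, extracting the $k=1$ term without an off-by-one error, and correctly rewriting the parity factor $(M+1)\bmod 2$ as $(n+j)\bmod 2$, so that all three pieces of (\ref{h2d}) emerge with the right constants. The identification of $H_2(n)$ with the difference sequences $(d_1,d_2,1^{j'})$ and the count $d_1d_2$ per sequence are immediate from Corollary \ref{corol1} and the freeness of $(P,*)$.
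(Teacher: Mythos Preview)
Your proof is correct and follows essentially the same approach as the paper's. The paper parametrizes by the size $i$ of the Durfee square factor (your $j'=j-2$), observes that $(HOOK_2\times Du_i)(n)$ is in bijection with the $2$-hooks of weight $n-(i+2)^2+4=M+1$ having inner hook of size $\geq 2$, writes this count as $p(M+1,2)$ minus the number of $2$-hooks with inner hook $1$ (which is $M-2$), and invokes (\ref{pn2}); your difference-sequence parametrization and the evaluation of the inner sum arrive at the identical expression $p(M+1,2)-(M-2)$ and the same range $2\le j\le\lfloor\sqrt{n-2}\rfloor$.
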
 
\begin{proof}
The set $H_2(n)$ is the disjoint union of the products of a non degenerate $2$-hook with a Durfee square of maximal size $\lfloor \sqrt{n-2}\rfloor-2 $:
\begin{equation*}
H_2(n)= \big (\bigcup_{i=0}^{\lfloor \sqrt{n-2}\rfloor-2} HOOK_2\times Du_i\big)\bigcap P(n)
\end{equation*}
where $Du_i$ is the Durfee square of size $i\times i$. Since $(HOOK_2\times Du_i)(n)=P(n-(i+2)^2+4,2)-P(n-(i+2)^2+4,2,1)$ i.e. the partitions of  $n$ which are 2-hook times a Durfee square of size $i$ are in bijection with the $2$-hooks of weight $n-(i+2)^2+4$ minus those with inner hook of size $1$. This last observation and identity (\ref{pn2}) imply (\ref{h2d}).
\end{proof}

To obtain closed expressions for $h_i(n),i\geq 3$, we need closed expressions for $p(n,i)$  but  
let us partition the sets  $H_r(n)$ according to their difference sequences  so that their cardinalities satisfy:
\begin{equation}\label{eq23}
h_r(n)=\sum_{d_r=2}^{\lfloor \frac n r \rfloor-2}\sum_{d_{r-1}=1}^{\lfloor \frac {n-rd_r} {r-1} \rfloor-2}
 \cdots \sum_{d_2=1}^{\lfloor \frac {n-\sum_{i=3}^rid_i} 2 \rfloor-2}
 h_r(n,d_2,d_3,\ldots , d_r)
\end{equation}
where $h_r(n,d_2,d_3,\ldots , d_r)$ is the number of partitions of $n$ with difference sequence of the form \\
$(n-(r+s)^2-\sum_{i=2}^r id_i+\binom{r+1}{2},d_2,\ldots , d_r,1^s)$. The numbers $h_r(n,d_2,d_3,\ldots , d_r)$ satisfy the following recurrences
\begin{eqnarray*}
h_r(n,d_2,d_3,\ldots , d_r)&=&(d_2\cdots d_r) \; h_r(n-\sum_{i=2}^r i(d_i-1),1^{r-1})\\
h_r(n,1^r) &=& h_1(n)-n-(n-3)-(n-8)\cdots -(n-r^2+1)\\
&=& h_1(n)-rn+\frac{r(2r+5)(r-1)}6
\end{eqnarray*}
so that equation (\ref{eqpn}) becomes 
\begin{eqnarray}
\nonumber
p(n)=h_1(n)+&&\sum_{r=2}^{\lfloor\frac{n}{\lfloor \sqrt n\rfloor+1}\rfloor}\sum_{d_r=2}^{\lfloor \frac n r \rfloor-2}
\sum_{d_{r-1}=1}^{\lfloor \frac {n-rd_r}{r-1} \rfloor-2} \cdots \sum_{d_2=1}^{\lfloor n -\sum_{i=3}^r id_i \rfloor-2}\\
&&\prod_{i=2}^r d_i 
\left[  h_1(n-\sum_{i=2}^r  i(d_i-1))- (r-1)(n-\sum_{i=2}^r i(d_i-1))+\frac{(r-1)(2r+3)(r-2)}6 \right]
\end{eqnarray}
where $h_1(n)$ is given by (\ref{h1}).

 \subsection{Durfee $\times $ HOOK}
Let us count the partitions of $n$ that are the product of a Durfee square of size $r\times r$ on the left with a $1$-hook. As illustrated in figure \ref{fig7}, we have 
\begin{equation}
n=r^2+rj+ri+i+j-1=r^2+r(k+1)+k=(r+1)(k+r)
\end{equation}
where $k=i+j-1$ is the size of the $1$-hook.   The number $ dh(n)$ of these partitions is given by the factorisations of $n$ as a product of two integers:
\begin{equation}
 dh(n)=\sum_{{\begin{subarray}{c} x|n \\ x\leq \sqrt(n)  \end{subarray}}}
 (\frac n x -x+1)
\end{equation}

\begin{figure}[htbp]
\begin{center}
\includegraphics{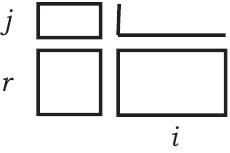}
\caption{The product of a  Durfee square with a $1$-hook}
\label{fig7}
\end{center}
\end{figure}
The numbers $dh(n)$ thus have a number theoretic interpretation as the sum over the pairs of divisors of $n$ of the positive differences of the divisors  plus one and their generating function is:
\begin{equation}
\sum_{n\geq 1}dh(n)x^n=\sum_{r\geq 1}  \frac{x^{r^2}}{(1-x^r)^2} 
\end{equation}


\section{Generating functions} \label{sec5}Starting with Leonard Euler, Generating functions have been used extensively in the study of partitions. The list of contributors is too important  and my knowledge  too small to present a proper account of their work.  The goal of this section is to present a generating function  in several variables of the number of $r$-hooks  which I have not seen in the literature on partitions and from which I derive an exact expression for $p(n,3)$.

It is well known (see \cite{A}) that the generating function for the number $p(n,r)$ of partitions of $n$ that are $r$-hooks is 
\begin{equation}\label{eq31}
\sum_{n\geq 0}p(n,r)x^n=\frac{x^{r^2}}{\prod_{i=1}^r(1-x^i)^2}
\end{equation}
This generating function may be refined into a series in several variables: 

\begin{proposition}\label{th3}
\begin{align}\label{eq9}
\sum_{r\geq 1} \sum_{(k_1,k_2,\ldots ,k_r)}p((k_1,k_2,\ldots ,k_r))x_1^{k_1}x_2^{k_2}\cdots x_r^{k_r}&=&
\sum_{r\geq 1} x_1^{r}x_2^{r-1}\cdots x_r^{1}\prod_{i=1}^r\frac d{dx_i} \left(\frac 1{1-x_1x_2\cdots x_i}
\right)\\
&=&
\label{eq91}
\sum_{r\geq 1} x_1^{2r-1}x_2^{2r-3}\cdots x_r^{1}\prod_{i=1}^r \left(\frac 1{1-x_1x_2\cdots x_i}\right)^2
\end{align}
\end{proposition}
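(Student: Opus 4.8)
The plan is to compute, for each fixed $r\ge 1$, the inner sum
$S_r:=\sum_{(k_1,\dots,k_r)}p((k_1,\dots,k_r))\,x_1^{k_1}\cdots x_r^{k_r}$
by passing from hook types to difference sequences, and then to recognise the outcome as a product of one-variable series. By Corollary \ref{corol1} (equivalently, by the hook factorisation of Corollary \ref{cor2}), if the hook type $(k_1,\dots,k_r)$ has difference sequence $(d_1,\dots,d_r)$ — so $d_j=k_j-k_{j+1}-1$ for $j<r$ and $d_r=k_r$ — then $p((k_1,\dots,k_r))=d_1d_2\cdots d_r$, and $(k_1,\dots,k_r)\mapsto(d_1,\dots,d_r)$ is a bijection from the length-$r$ hook types onto the $r$-tuples of positive integers. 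Using the inversion formula $k_j=(r-j)+\sum_{i=j}^{r}d_i$ recorded just before (\ref{eqn}), the first step is the monomial identity
\begin{equation*}
x_1^{k_1}\cdots x_r^{k_r}=\Bigl(\prod_{j=1}^{r}x_j^{\,r-j}\Bigr)\prod_{i=1}^{r}(x_1x_2\cdots x_i)^{d_i},
\end{equation*}
valid because $d_i$ enters the exponent of $x_j$ precisely when $j\le i$.

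Next I would interchange summation and product: since the difference sequence ranges over all tuples with $d_i\ge 1$, this yields
\begin{equation*}
S_r=\Bigl(\prod_{j=1}^{r}x_j^{\,r-j}\Bigr)\prod_{i=1}^{r}\Bigl(\sum_{d\ge 1}d\,(x_1\cdots x_i)^{d}\Bigr),
\end{equation*}
and each inner sum is the elementary series $\sum_{d\ge1}d\,y^{d}=y/(1-y)^2$ at $y=x_1\cdots x_i$, i.e. $(x_1\cdots x_i)/(1-x_1\cdots x_i)^2$. Multiplying together the monomial factor $\prod_{j}x_j^{\,r-j}$ and the factor $\prod_{i}(x_1\cdots x_i)$ coming from the numerators, the variable $x_j$ acquires total exponent $(r-j)+(r-j+1)=2(r-j)+1$, so the monomial prefactor is exactly $x_1^{2r-1}x_2^{2r-3}\cdots x_r^{1}$; summing $S_r$ over $r\ge1$ then gives (\ref{eq91}).

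Finally, to pass to (\ref{eq9}) I would note that, differentiating with $x_1,\dots,x_{i-1}$ treated as constants, $\frac{d}{dx_i}\bigl(1/(1-x_1\cdots x_i)\bigr)=(x_1\cdots x_{i-1})/(1-x_1\cdots x_i)^2$; hence $\prod_{i=1}^{r}\frac{d}{dx_i}\bigl(1/(1-x_1\cdots x_i)\bigr)$ equals $\prod_{i=1}^{r}1/(1-x_1\cdots x_i)^2$ times the monomial $\prod_{i=1}^{r}(x_1\cdots x_{i-1})=x_1^{r-1}x_2^{r-2}\cdots x_{r-1}^{1}$, and one checks that $x_1^{r}x_2^{r-1}\cdots x_r^{1}\cdot x_1^{r-1}x_2^{r-2}\cdots x_{r-1}^{1}=x_1^{2r-1}x_2^{2r-3}\cdots x_r^{1}$, so the two right-hand sides agree term by term in $r$. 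Everything is interpreted in the ring of formal power series in $x_1,x_2,\dots$, so convergence is not an issue; the only point requiring genuine care is the exponent bookkeeping — confirming that $x_j$ carries total exponent $2(r-j)+1$ in both closed forms — which is the (very mild) main obstacle rather than any conceptual difficulty.
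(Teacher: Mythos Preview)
Your proof is correct and follows essentially the same route as the paper: both use Corollary \ref{corol1} to replace $p((k_1,\dots,k_r))$ by $d_1\cdots d_r$, reparametrise by the difference sequence via $k_j=(r-j)+\sum_{i\ge j}d_i$, and then factor the resulting sum using $\sum_{d\ge1}d\,y^d=y/(1-y)^2$ together with the derivative identity $\frac{d}{dx_i}\bigl(1/(1-x_1\cdots x_i)\bigr)=(x_1\cdots x_{i-1})/(1-x_1\cdots x_i)^2$. The only cosmetic difference is direction: the paper starts from the right-hand side of (\ref{eq9}), expands, and reads off the coefficient of $x_1^{k_1}\cdots x_r^{k_r}$, whereas you start from the left-hand side, reach (\ref{eq91}), and then match it with (\ref{eq9}).
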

\begin{proof}
We have to show that the coefficient of $x_1^{k_1}x_2^{k_2}\cdots x_r^{k_r}$ in the right hand side of equation (\ref{eq9}) is $p((k_1,k_2,\ldots ,k_r))=(k_1-k_2-1)(k_2-k_3-1)\cdots (k_{r-1}-k_r-1)k_r$. First observe that
\begin{align*}
\frac d{dx_i} \left(\frac 1{1-x_1\cdots x_i}\right)
&=\frac{x_1\cdots x_{i-1}}{(1-x_1\cdots x_i)^2}\\
\nonumber
&= x_1\cdots x_{i-1}\left[ 1+2 x_1\cdots x_i +3(x_1\cdots x_i)^2+\cdots k(x_1\cdots x_i)^{k-1}+\ldots \right]
\end{align*}
Thus
\begin{align*}
x_1^{r}x_2^{r-1}\cdots x_r^{1}\prod_{i=1}^r\frac d{dx_i} \left(\frac 1{1-x_1\cdots x_i}\right)
=& x_1^{r}x_2^{r-1}\cdots x_r^{1}\left[  1+2x_1+\ldots +kx_1^{k-1}+\ldots \right]\times \\
\nonumber 
&x_1\left[  1+2x_1x_2+\ldots +k(x_1x_2)^{k-1}+\ldots \right] \times \\
\nonumber 
&x_1x_2\left[  1+2x_1x_2x_3+\ldots +k(x_1x_2x_3)^{k-1}+\cdots \right] \times \\
\nonumber 
&\vdots \\
\nonumber 
&x_1\cdots x_{r-1}\left[  1+2x_1\cdots x_r+\ldots +k(x_1\cdots x_r)^{k-1}+\ldots \right] \\
\nonumber 
= x_1^{2r-1}x_2^{2r-3}\cdots x_{r-1}^{3}&x_r\left[ \sum_{(d_1,d_2,\ldots ,d_r)} d_1d_2\cdots d_r  x_1^{d_1-1}(x_1x_2)^{d_2-1}\cdots (x_1\cdots x_r)^{d_r-1}
\right]\\
\nonumber 
= \sum_{(d_1,d_2,\ldots ,d_r)} d_1d_2\cdots d_r  &x_1^{2r-1+\sum_{i=1}^r(d_i-1)}
x_2^{2r-3+\sum_{i=2}^r(d_i-1)}\cdots  
x_{r-1}^{3+\sum_{i=r-1}^r+(d_i-1)}
x_r^{d_r-1}
\end{align*}
Recalling that for any sequence  $(d_1,d_2,\ldots ,d_r)$ of differences, the corresponding hook type  $(k_1,k_2,\ldots ,k_r)$ is given by $(k_1,k_2,\ldots ,k_r)=
 (r-1+\sum_{i=1}^r(d_i), r-2+\sum_{i=2}^r(d_i), \ldots , d_{r-1}+d_r+1,d_r)$ which is the sequence of exponents in the last expression and proves the theorem.
\end{proof}

The following well known expressions for $p(n,r)$ and $p(n)$ are  specializations of theorem \ref{th3} that are easily derived:

\begin{corollary}
\begin{align}
\label{eq112}
p(n,r)=& \frac{x^{\binom{r+1}2}}{r!}\prod_{i=1}^r \frac d {dx}\left( \frac 1 {1-x^i}\right)\Big{|}_{x^n}\\
\label{eq12}
	=&\,  x^{r^2}\prod _{i=1}^r \frac 1 {(1-x^i)^2}\Big{|}_{x^n}
\end{align}
\begin{align}
\label{eq13}
p(n)=&\sum_{r=1}^{\lfloor \sqrt{n}\rfloor} \frac{x^{\binom{r+1}2}}{r!}\prod_{i=1}^r \frac d {dx}\left( \frac 1 {1-x^i}\right)\Big{|}_{x^n}\\
\label{eq14}
	=&\,  \sum_{r=1}^{\lfloor \sqrt{n}\rfloor} x^{r^2}\prod _{i=1}^r \frac 1 {(1-x^i)^2}\Big{|}_{x^n}
\end{align}
\end{corollary}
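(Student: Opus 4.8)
The plan is to obtain all four identities by specializing the multivariate generating function of Proposition \ref{th3} at $x_1 = x_2 = \cdots = x_r = x$ and then summing over $r$. First I would fix $r\geq 1$ and isolate the $r$-th summand on each side of the chain of identities (\ref{eq9})--(\ref{eq91}); this is legitimate since, for each $n$, only finitely many monomials of total degree $n$ occur. Under the substitution $x_i \mapsto x$ the monomial $x_1^{k_1}x_2^{k_2}\cdots x_r^{k_r}$ collapses to $x^{k_1+k_2+\cdots+k_r}$, and since $k_1+\cdots+k_r$ is exactly the weight of a partition with hook type $(k_1,\ldots,k_r)$, grouping terms by weight and invoking (\ref{eq00}) shows that the left-hand side of (\ref{eq9}) specializes to $\sum_{n\geq 0} p(n,r)\,x^n$.

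Next I would carry out the substitution on the right-hand sides. On (\ref{eq91}) it is immediate: the prefactor $x_1^{2r-1}x_2^{2r-3}\cdots x_r^{1}$ becomes $x^{(2r-1)+(2r-3)+\cdots+1} = x^{r^2}$, using the classical identity $\sum_{j=1}^r(2j-1)=r^2$, while each factor $(1-x_1\cdots x_i)^{-2}$ becomes $(1-x^i)^{-2}$. This yields $\sum_{n}p(n,r)x^n = x^{r^2}\prod_{i=1}^r(1-x^i)^{-2}$, which is (\ref{eq12}) and recovers the known expression (\ref{eq31}). For (\ref{eq9}) I would first compute the inner derivative \emph{before} specializing, namely $\frac{d}{dx_i}\big((1-x_1\cdots x_i)^{-1}\big) = x_1\cdots x_{i-1}(1-x_1\cdots x_i)^{-2}$, which at $x_1=\cdots=x_r=x$ equals $x^{i-1}(1-x^i)^{-2}$. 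Since $x^{i-1}(1-x^i)^{-2} = \frac1i\,\frac{d}{dx}\big((1-x^i)^{-1}\big)$, the product over $i=1,\ldots,r$ produces the factor $\prod_{i=1}^r \frac1i = \frac1{r!}$ times $\prod_{i=1}^r\frac{d}{dx}\big((1-x^i)^{-1}\big)$, while the prefactor $x_1^r x_2^{r-1}\cdots x_r^1$ becomes $x^{r+(r-1)+\cdots+1} = x^{\binom{r+1}{2}}$. This gives exactly (\ref{eq112}); consistency with (\ref{eq12}) is the arithmetic identity $\binom{r+1}{2}+\binom r2 = r^2$.

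Finally, (\ref{eq13}) and (\ref{eq14}) follow by summing the two expressions for $p(n,r)$ over $r=1,\ldots,\lfloor\sqrt n\rfloor$ and invoking (\ref{eq0}), which asserts $p(n)=\sum_{r=1}^{\lfloor\sqrt n\rfloor}p(n,r)$; extraction of the coefficient of $x^n$ commutes with this finite sum.

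The proof is essentially routine once Proposition \ref{th3} is available; the only point requiring care is the distinction between the partial derivatives $\frac{d}{dx_i}$ in (\ref{eq9}), which must be evaluated before the variables are identified, and the ordinary one-variable derivative $\frac{d}{dx}$ in (\ref{eq112}). The bridge between them is the elementary observation $x^{i-1}(1-x^i)^{-2} = \frac1i\,\frac{d}{dx}(1-x^i)^{-1}$, whose product over $i\le r$ contributes the normalizing factor $\frac1{r!}$; I would spell this out rather than conflate the two kinds of derivative.
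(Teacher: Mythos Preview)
Your proof is correct and follows exactly the route sketched in the paper: specialize the identities (\ref{eq9}) and (\ref{eq91}) of Proposition~\ref{th3} via $x_i\mapsto x$, use (\ref{eq00}) to identify the resulting series as $\sum_n p(n,r)x^n$, and then sum over $r$ using (\ref{eq0}). The only difference is that you spell out the step the paper leaves implicit, namely that evaluating $\tfrac{d}{dx_i}(1-x_1\cdots x_i)^{-1}$ before substitution gives $x^{i-1}(1-x^i)^{-2}=\tfrac1i\,\tfrac{d}{dx}(1-x^i)^{-1}$, whose product over $i$ supplies the $\tfrac1{r!}$ in (\ref{eq112}).
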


\begin{proof}
Equations (\ref{eq112}) to (\ref{eq14})  are immediately obtained from the substitutions $x_i\rightarrow x$ in (\ref{eq9}) and (\ref{eq91}) and from equations (\ref{eq0})  and  (\ref{eq00}). These equations appear in equivalent form in \cite{A}(section 2.2)  and  \cite{G}  and equation  (\ref{eq12}) is attributed to Euler. 
\end{proof}

We can derive from equation (\ref{eq31}) exact expressions for $p(n,r)$, $r=2,3,\ldots$.

\begin{proposition}\label{prop7}
\begin{eqnarray}\label{eq38a}
p(n,3)&=&\frac {(n+1)(n-5)}{12960}(3n^3-33n^2+83n-13)-
\frac {(n-3)}{32}(\lfloor \frac n 2\rfloor-\lfloor \frac {n-1} 2\rfloor)\\
\nonumber
&&+\lfloor \frac {n} 3\rfloor \frac {(3n-8)}{81}
-\lfloor \frac {n-1} 3\rfloor \frac {(3n-10)}{81}
-\frac {2}{81}\lfloor \frac {n-2} 3\rfloor \\ \label{eq39a}
&=&\frac {(n+1)(n-5)}{12960}(3n^3-33n^2+83n-13)+
\begin{cases}
\frac {(3n-8)}{81}-\frac {(n-3)}{32} &  \text {if $n\equiv 0 \mod 6$}\\
\frac {2}{81} &  \text {if $n\equiv 1\mod 6$}\\
-\frac {(n-3)}{32} &  \text {if $n\equiv 2 \mod 6$}\\
\frac {(3n-8)}{81} &  \text {if $n\equiv 3 \mod 6$}\\
\frac {2}{81}-\frac {(n-3)}{32} &  \text {if $n\equiv 4 \mod 6$}\\
0 & \text {if $n\equiv 5\mod 6$}
\end{cases}
\end{eqnarray}
\end{proposition}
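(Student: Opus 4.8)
The plan is to extract the closed form for $p(n,3)$ directly from the generating function identity (\ref{eq31}) with $r=3$, namely
\[
\sum_{n\ge 0}p(n,3)x^n=\frac{x^{9}}{(1-x)^2(1-x^2)^2(1-x^3)^2},
\]
by performing a partial fraction decomposition of the rational function on the right-hand side. First I would factor the denominator over the relevant cyclotomic pieces: it has a pole of order $6$ at $x=1$, a pole of order $2$ at $x=-1$, and a pole of order $2$ at each primitive cube root of unity $\omega,\bar\omega$. Writing
\[
\frac{x^{9}}{(1-x)^2(1-x^2)^2(1-x^3)^2}
=\sum_{j=1}^{6}\frac{a_j}{(1-x)^j}+\frac{b_1}{1+x}+\frac{b_2}{(1+x)^2}+\frac{c_1+c_2x}{1+x+x^2}+\frac{c_3+c_4x}{(1+x+x^2)^2},
\]
I would determine the constants by the usual residue computations (multiply through by the appropriate local factor and evaluate/differentiate at the pole). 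Each term then contributes to $p(n,3)$ a known coefficient: $1/(1-x)^j$ gives $\binom{n+j-1}{j-1}$, a polynomial in $n$ of degree $j-1$; $1/(1+x)$ and $1/(1+x)^2$ give contributions of the form $(\pm1)^n(\alpha+\beta n)$, which in the final formula are packaged using $\lfloor n/2\rfloor-\lfloor(n-1)/2\rfloor=\tfrac{1}{2}(1-(-1)^n)$; and the cube-root-of-unity terms give $6$-periodic contributions, which are packaged via $\lfloor n/3\rfloor$, $\lfloor(n-1)/3\rfloor$, $\lfloor(n-2)/3\rfloor$. Collecting the polynomial part into $\frac{(n+1)(n-5)}{12960}(3n^3-33n^2+83n-13)$ and the periodic corrections into the stated floor expressions yields (\ref{eq38a}); reducing modulo $6$ and simplifying the two floor-difference terms case by case yields the equivalent piecewise form (\ref{eq39a}).

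In carrying this out, the degree-$5$ polynomial part deserves a sanity check independent of the residue bookkeeping: since $p(n,3)$ counts lattice points under the constraints $\sum k_i=n$, $k_i\ge k_{i+1}+2$, weighted by $k_3(k_2-k_3-1)(k_1-k_2-1)$, the leading behaviour should match the "continuous volume" obtained by replacing the triple sum in (\ref{pn3}) with a triple integral, giving $p(n,3)\sim n^5/12960$, consistent with the claimed leading coefficient $3n^5/12960$. I would also verify the formula against small values: the smallest $3$-hook has weight $r^2=9$ with $p(9,3)=1$, then $p(10,3)=0$ (no admissible hook type: one needs $k_1\ge k_2+2\ge k_3+4\ge 5$, forcing $k_1+k_2+k_3\ge 5+3+1=9$, and the next weight with a solution is... one checks directly), and a handful of further values from (\ref{pn3}); agreement on six consecutive values pins down both the degree-$\le 5$ polynomial and the $6$-periodic correction uniquely, which is really the cleanest way to \emph{certify} the answer once it is guessed.

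The main obstacle is purely computational: the sixth-order pole at $x=1$ requires five successive derivatives of $x^{9}(1+x)^{-2}(1+x+x^2)^{-2}$ evaluated at $x=1$ (equivalently, a Taylor expansion of $\frac{x^9}{(1+x)^2(1+x+x^2)^2}$ to order $5$ about $x=1$), and the two second-order poles at $\omega,\bar\omega$ require differentiating $\frac{x^9}{(1-x)^2(1+x)^2(x-\bar\omega)^2}$ and taking real and imaginary parts to recover $c_1,c_2,c_3,c_4$. This is lengthy but entirely mechanical, and can be organized by first writing $\frac{1}{(1-x^3)^2}=\frac{1}{(1-x)^2(1+x+x^2)^2}$ and expanding $\frac{1}{(1+x+x^2)^2}$ as a power series, so that $p(n,3)$ becomes a single convolution one can evaluate in closed form; alternatively one simply trusts a symbolic computation. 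Either route produces the constants, and the only remaining care is in repackaging the $2$-periodic and $3$-periodic pieces into the floor notation of (\ref{eq38a}) and then into the six-case split of (\ref{eq39a}), where one must be attentive to how the $(-1)^n$ and $\omega^n$ contributions interleave modulo $6$.
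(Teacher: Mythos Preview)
Your approach is essentially the same as the paper's: start from the generating function $\frac{x^9}{(1-x)^2(1-x^2)^2(1-x^3)^2}$, perform a partial fraction decomposition over the poles at $1$, $-1$, and the primitive cube roots of unity, read off the polynomial part from the $(1-x)^{-j}$ terms and the periodic corrections from the $(1+x)^{-j}$ and $(1+x+x^2)^{-j}$ terms, and repackage the latter using the floor-difference indicators before splitting modulo $6$. Two small slips to fix in your write-up: $\lfloor n/2\rfloor-\lfloor (n-1)/2\rfloor$ equals $1$ for $n$ even and $0$ for $n$ odd (so $\tfrac12(1+(-1)^n)$, not $\tfrac12(1-(-1)^n)$), and $p(10,3)=2$ (hook type $(6,3,1)$ gives $1\cdot 1\cdot 2$), not $0$.
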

\begin{proof}
For $r=3$, expand  equation (\ref{eq31})  as a partial fraction and rearrange the expression to our needs:
\begin{eqnarray}
\nonumber
\sum_{n\geq 0}p(n,3)x^n&=&\frac{x^9}{(1-x)^2(1-x^2)^2(1-x^3)^2}\\
\nonumber
&=& \frac{1}{36(1-x)^6}- \frac{1}{6(1-x)^5}+ \frac{169}{432(1-x)^4}
- \frac{185}{432(1-x)^3}+ \frac{307}{1728(1-x)^2}+ \frac{11}{432(1-x)}\\
\nonumber
&&-\frac{3+x}{27(1+x+x^2)}+\frac{1+x}{27(1+x+x^2)^2}+\frac{1}{16(1+x)}-\frac{1}{64(1+x)^2}\\
\nonumber
&=& \frac{1}{36(1-x)^6}- \frac{1}{6(1-x)^5}+ \frac{169}{432(1-x)^4}
- \frac{185}{432(1-x)^3}\\
\nonumber
&& + \frac{1}{8(1-x^2)}-\frac{(4-x)}{27(1-x^3)}+\frac{(x+2)(1+x^2+x^3)}{27(1-x^3)^2}-\frac{(1+x^2)}{32(1-x^2)^2}+\frac{1}{8(1-x)^2}\\
&=&
\label{eq38}\sum_{n\geq 0}\left[ \frac{\binom{n+5}{5}}{36}-\frac{\binom{n+4}{4}}{6}+\frac{169\binom{n+3}{3}}{432}-\frac{185\binom{n+2}{2}}{432}+
\frac{(n+1)}{8}
\right]x^n\\
\nonumber
&& +\left[ \frac{1}{8}+\frac{x(n+1)}{16}
\right]x^{2n}
+\left[ \frac{x+ (x+2)(1+x^2+x^3)(n+1)}{27}
\right]x^{3n}
\end{eqnarray}
Now to reduce the coefficients of $x^{2n}$ and $x^{3n}$ in the last expression into coefficients of $x^n$ , we need two  functions $f_1,f_2:\Bbb{N}\rightarrow \Bbb{N}$ such that 
\begin{equation*}
f_1(n)=
\begin{cases}
1 &  \text {if $n \equiv 0 \mod 2$}\\
0 & \text {otherwise}
\end{cases},\qquad
f_2(n)=
\begin{cases}
1 &  \text {if $n\equiv 0 \mod 3$ }\\
0 & \text {otherwise}
\end{cases}
\end{equation*}
Several descriptions satisfy these conditions  and I choose 
$f_1(n)=\lfloor\frac{n}2  \rfloor - \lfloor\frac{n-1}2  \rfloor$, $f_2(n)=\lfloor\frac{n}3  \rfloor - \lfloor\frac{n-1}3  \rfloor$ so that from equation (\ref{eq38}) we obtain
\begin{eqnarray*}
p(n,3)&=&\left[ \frac{\binom{n+5}{5}}{36}-\frac{\binom{n+4}{4}}{6}+\frac{169\binom{n+3}{3}}{432}-\frac{185\binom{n+2}{2}}{432}+
\frac{(n+1)}{8}\right]+ \frac{1}{8}\left(\lfloor\frac{n}2  \rfloor - \lfloor\frac{n-1}2  \rfloor\right)\\
&&+\frac{(n+1)/2}{16}
\left(\lfloor\frac{n-1}2  \rfloor - \lfloor\frac{n-2}2  \rfloor\right)
+\frac{(5n-6)}{81}\left(\lfloor\frac{n}3  \rfloor - \lfloor\frac{n-1}3  \rfloor\right)\\
&&+\frac{2(n+2)}{81}\left(\lfloor\frac{n-1}3  \rfloor - \lfloor\frac{n-2}3  \rfloor\right)
+\frac{2(n+1)}{81}\left(\lfloor\frac{n-2}3  \rfloor - \lfloor\frac{n-3}3  \rfloor\right)
\end{eqnarray*}
from which we deduce equation (\ref{eq38a}). Equation (\ref{eq39a}) is obtained directly from (\ref{eq38a}) and the definitions of $f_1$ and $f_2$.
\end{proof}
The expression for $p(n,2)$ in equation (\ref{pn2}) is obtained similarly. 

\section{Invariance under the difference product }\label{sec6}
As we have seen in corollary \ref{corol1}, the  set of partitions $\l$ with hook type $(k_1,k_2,\ldots , k_r)$ has cardinality given by the product of corresponding differences $d=\prod_{i=1}^r d_i$. Conversely, the product $d$ can be factored as a product of integers, including the factor $1$, in an infinite number of ways, each way indexing a distinct set of partitions. The product of  differences  $\prod_{i=1}^r d_i$ is thus invariant under multiplication with Durfee squares and becomes an invariant for an infinite  family of sets of partitions all having the same difference product and distinct difference sequences. 
This stability property occurs elsewhere in combinatorics. In particular  Frobenius (\cite{Fr}) and others observed it for conjugacy classes of permutations in the symmmetric group $S_n$  and  irreducible representations  of $S_n$, which are both indexed with integer partitions. If we let $H_i=HOOK_i\otimes DU$ be the set of non degenerate $i$-hooks multiplied on the right with an arbitrary Durfee square, we have the decomposition 
\begin{equation*}
P= DU \cup H_1\cup H_2 \cup \cdots 
\end{equation*}

A given product $d$ may also be obtained from difference sequences  of  identical weights. For example the difference sequences $(4,1,1,2), (2,4,1,1),(2,2,1,2),(8,1,1,1)$ all have same weight $n= 23$ and same product. 
There is a number of  natural questions that are simply jumping at us  regarding this last observation. First, is it possible to give a  characterization  of the sets of partitions with given product of differences and same weight?  In other words: {\it Describe the transformations on partitions tha leave a given difference product and weight invariants}.  In particular, how many difference sequences give the same weight and product? For example it is easy to prove that for all odd $n>6$ and positive integers $a\neq b$ with $a+b=(n-1)/2$, there are precisely two pairs  of differences namely $(2a,b),(2b,a)$ with same weight and product so that  $p(n,2)$ is (n-3) times the average area of all integer rectangles of perimeter $n-1$.
%


A difference product $d$ is left invariant by any permutation of the set $(d_1,d_2,\ldots , d_r)$ but what is the effect on the weight $n$   of permuting the entries in a difference sequence?  
\begin{proposition}\label{th4} 
Let $D= \{ d_1\geq d_2\geq \ldots  \geq d_r\}$ be a difference set and let $S(D)=\{ (d_{\s_1},\ldots , d_{\s_r}),\s\in S_r\}$ be the set of difference sequences obtained from $D$. Then  
$\d_1=(d_1, d_2, \ldots  ,d_r)$ is the only difference sequence in $S(D)$ with minimum weight  and 
$\d_2=(d_r, d_{r-1}, \ldots  ,d_1)$ is the only difference sequence  in $S(D)$ with maximum weight.
Moreover the difference between the maximum and minimum weight is $|\d_2|-|\d_1|=\sum_{i<j}(d_j-d_i)$.
\end{proposition}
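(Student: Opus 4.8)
The plan is to peel off the additive constant in the weight formula, recognise what remains as an instance of the rearrangement inequality, and get the uniqueness clause for free from the fact that the coefficient vector $(1,2,\ldots,r)$ has pairwise distinct entries.

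First I would record that, by~(\ref{eqn}), every difference sequence $\d=(e_1,\ldots,e_r)$ has weight $|\d|=\sum_{i=1}^r i\,e_i+\binom r2$. Each sequence in $S(D)$ is a rearrangement of the same multiset and has the same length $r$, hence the same constant term $\binom r2$; so minimising (resp.\ maximising) $|\d|$ over $S(D)$ is equivalent to minimising (resp.\ maximising) the linear form $L(\d)=\sum_{i=1}^r i\,e_i$, i.e.\ the inner product of the fixed strictly increasing vector $(1,2,\ldots,r)$ with the permuted vector $(e_1,\ldots,e_r)$.

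Second, I would run the usual adjacent–transposition argument, which delivers optimality and uniqueness at one stroke. If $\d=(e_1,\ldots,e_r)\in S(D)$ is not weakly decreasing, choose an index $i$ with $e_i<e_{i+1}$ and let $\d'$ be $\d$ with entries $i$ and $i+1$ interchanged; then $L(\d')-L(\d)=\bigl(i\,e_{i+1}+(i+1)e_i\bigr)-\bigl(i\,e_i+(i+1)e_{i+1}\bigr)=e_i-e_{i+1}<0$, so $\d$ does not have minimum weight. Hence the unique minimiser is the (unique) weakly decreasing rearrangement of $D$, namely $\d_1=(d_1,\ldots,d_r)$; the identical argument applied to an adjacent pair with $e_i>e_{i+1}$ shows the unique maximiser is the weakly increasing rearrangement $\d_2=(d_r,\ldots,d_1)$. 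The word ``only'' here is to be read at the level of sequences: when $D$ has repeated entries several $\s\in S_r$ produce the same sequence, but there is still exactly one optimal sequence in $S(D)$, which is precisely what is asserted.

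Finally, for the size of the gap I would compute
\[
|\d_2|-|\d_1|=L(\d_2)-L(\d_1)=\sum_{i=1}^r i\,d_{r+1-i}-\sum_{i=1}^r i\,d_i ,
\]
reindex the first sum by $j=r+1-i$ to get $\sum_{j=1}^r(r+1-j)d_j$, whence $|\d_2|-|\d_1|=\sum_{j=1}^r(r+1-2j)d_j=(r+1)\sum_j d_j-2\sum_j j\,d_j$; the double-counting identity $\sum_{1\le i<j\le r}(d_i-d_j)=\sum_i(r-i)d_i-\sum_j(j-1)d_j$ then rewrites this as $\sum_{1\le i<j\le r}(d_i-d_j)$, a sum of nonnegative terms since $D$ is sorted, which is the claimed difference. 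I do not expect any serious obstacle; the only point that needs a moment's care is the uniqueness clause in the presence of repeated differences, which is exactly why I would phrase the whole argument in terms of sequences rather than permutations from the outset.
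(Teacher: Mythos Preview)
Your proposal is correct and follows essentially the same adjacent-transposition argument as the paper for the extremality and uniqueness of $\d_1$ and $\d_2$. For the gap $|\d_2|-|\d_1|$ you compute it directly from the weight formula and a double-counting identity, whereas the paper obtains it by summing the increments $d_i-d_j$ over the bubble-sort swaps that turn $\d_1$ into $\d_2$; both routes give $\sum_{i<j}(d_i-d_j)$, which is indeed the intended value (the printed statement has a sign slip, writing $d_j-d_i$). Your explicit treatment of uniqueness when $D$ has repeated entries is a point the paper leaves implicit.
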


\begin{proof}
We prove the first statement. If $\d'\neq \d_1$ then there is a position $1\leq i<r$ with a rise in $\d'$ i.e. such that $d_i'< d_{i+1}'$. Exchanging $d_i'$ and $d_{i+1}'$ in $\d'$ decreases the weight i.e. $|\tau_i\d_1'|<|\d_1'|$ if $\tau_i=(i,i+1)$ is the adjacent transposition. Repeating this process properly, we obtain $\d_1$ by strictly decreasing the weight at each step. This proves the first statement. The maximality of $\d_2$ is proved similarly. 

The proof of the third statement follows from the argument in the proof of the first statement.  Starting from  $\d_1$, if we exchange two adjacent differences $d_i,d_j$ when $d_i> d_j$,  the weight then increases by 
$d_i-d_j$. Repeating this process until we reach $\d_2$, we obtain the desired result. 
\end{proof}
The weight of difference sequences in $S(D)$ is not  compatible with direct or reverse  lexicographic order on $D$ but it is compatible with a partial order obtained from the transpositions of adjacent differences $d_i,d_j$ when $d_i> d_j$.
Here is a first result in that direction in the simple case where the difference set has only one number different from $1$:
\begin{proposition}\label{th5}
Let $d$  and $r$ be positive integers. The sets of partitions that are $r$-hooks with hook types corresponding to a difference set of the form $\{ d,1^{r-1}\}$ have hook types of the form 
\begin{equation}
\label{eq6}
\underbrace{(2s+d+2(r-s-1)\ldots,2s+d+2,2s+d}_{r-s},\underbrace{2s-1\ldots,3,1}_s)
\end{equation}
where $0\leq s<r$.
\end{proposition}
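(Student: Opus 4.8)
The plan is to evaluate the dictionary between difference sequences and hook types recorded in Section~\ref{sec2} — namely that a difference sequence $(d_1,\ldots,d_r)$ corresponds to the hook type $(k_1,\ldots,k_r)$ with $k_j=(r-j)+\sum_{i=j}^{r}d_i$ for $1\le j\le r$ — on the particular difference sequences coming from the set $D=\{d,1^{r-1}\}$. First I would note that, $D$ being the multiset consisting of $r-1$ copies of $1$ and one copy of $d$, every difference sequence in $S(D)$ has the form
\[
\d_s=(\underbrace{1,\ldots,1}_{r-s-1},\,d,\,\underbrace{1,\ldots,1}_{s}),
\]
for a unique integer $s$ with $0\le s<r$, the entry $d$ sitting in position $r-s$. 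If $d=1$ these sequences all coincide and the assertion reduces to the known fact that $(1^r)$ is the difference sequence of the $r\times r$ Durfee square, so from now on one may assume $d\ge2$.

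Next I would compute $k_j$ for $\d=\d_s$ by splitting on the position of the entry $d$. For $1\le j\le r-s$ the block $d_j,\ldots,d_r$ contains the single entry $d$ together with $r-j$ entries equal to $1$, whence $\sum_{i=j}^r d_i=d+(r-j)$ and therefore $k_j=d+2(r-j)$; as $j$ runs through $1,\ldots,r-s$ this yields the strictly decreasing run $d+2(r-1),\,d+2(r-2),\,\ldots,\,d+2s$, which is exactly the first bracketed block $2s+d+2(r-s-1),\ldots,2s+d+2,2s+d$ of length $r-s$ in \eqref{eq6}. For $r-s<j\le r$ the block $d_j,\ldots,d_r$ consists of $r-j+1$ entries all equal to $1$, so $\sum_{i=j}^r d_i=r-j+1$ and $k_j=2(r-j)+1$; as $j$ runs through $r-s+1,\ldots,r$ this yields the run $2s-1,2s-3,\ldots,3,1$ of length $s$, the second bracketed block. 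Concatenating the two runs gives precisely the hook type in \eqref{eq6}, and since $s$ ranges over all of $\{0,1,\ldots,r-1\}$ as $\d_s$ ranges over $S(D)$, these are exactly the hook types in question.

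I do not expect a genuine obstacle here: the statement is a direct substitution into a formula already established, so the real work is index bookkeeping — keeping straight the lengths $r-s$ and $s$ of the two runs, the degenerate cases $s=0$ and $s=r-1$ where one run is empty or a single term, and checking that the two runs splice into a legitimate hook type (indeed $k_{r-s}=d+2s$ and $k_{r-s+1}=2s-1$ differ by $d+1\ge3$, while within each run consecutive entries differ by $2$). If a more pictorial argument is preferred, one can instead use the hook factorisation of Corollary~\ref{cor2}: the sequence $\d_s$ factors a partition as a Durfee square of size $r-s-1$, times a $1$-hook of size $d$, times a Durfee square of size $s$ (recalling from Figure~\ref{fig6} that a $t\times t$ square is the product of $t$ single dots), and reading off the vertical and horizontal strips appended at each insertion reproduces the same two runs; but the computation above is the shortest route.
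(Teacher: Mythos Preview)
Your proposal is correct and follows essentially the same route as the paper: both arguments observe that the difference sequences arising from $\{d,1^{r-1}\}$ are exactly $(1^{r-s-1},d,1^{s})$ for $0\le s<r$, and then recover the hook type from that sequence. Your write-up carries out the passage to $(k_1,\ldots,k_r)$ explicitly via the formula $k_j=(r-j)+\sum_{i\ge j}d_i$, whereas the paper instead sketches the Ferrers diagram (an inner $s\times s$ block with two $(r-s)$-thick rectangular strips of total width $d-1$) and declares the hook shape ``immediate''; the content is the same.
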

\begin{proof} The only possible factorization of the number $d$ as a product of $r$ integers taken from the difference set $\{ d,1^{r-1}\}$  is  $d=d\cdot 1^{r-1}$ and 
the hook type in equation (\ref{eq6}) has a difference sequence equal to $(1^{r-s-1},d,1^s)$ as in figure \ref{fig8}. The Ferrers diagram of a partition with difference set equal to $\{ d,1^{r-1}\}$ is the juxtaposition of a durfee square of size $s$ and two rectangles of sizes $(r-s)\times d_1$ and $(r-s)\times d_2$ with $d_1+d_2=d-1$: 
\begin{figure}[htbp]
\begin{center}
\includegraphics[width=.16 \textwidth]{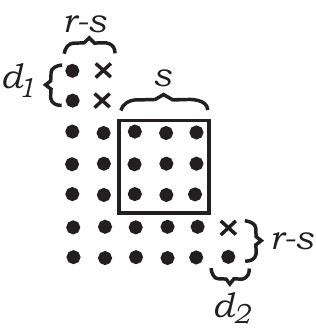}
\caption{A $5$-hook with difference sequence $(1,4,1^3)$}
\label{fig8}
\end{center}
\end{figure}

Thus a $r$-hook  with difference sequence  $(1^{r-s-1},d,1^s)$ is  a partition of weight $r^2+(r-s)(d-1)$ and the hook shape in (\ref{eq6}) is immediate.
\end{proof}
Now how do we recognize or construct a partition from its difference set  $\{ d_1,d_2,\ldots, d_t,1^{r-t}\} $ ?  First we have to choose integers $s_0\geq 0,s_1\geq 1,\ldots , s_t\geq 1$ such that $\sum_i s_i=r$. Then we start with a square of size $s_0\times s_0$. On top left corner of the square, we add a rectangle  with $d_1-1$ rows of lengths $\sum_{i=1}^rs_i$. This rectangle can slide around the square $s_0\times s_0$ so that one part of it comes out  on the bottom right corner of the square.  Then on top left corner of the previous rectangle, we add a second rectangle  with $d_2-1$ rows of lengths  $\sum_{i=2}^ts_i$ allowing again  a sliding around that respects the boundary of the previous rectangle. We continue the process of adding rectangles in this manner until the last one which has $d_t-1$ rows of lengths  $s_t$. Any Ferrers diagram that satisfies these conditions and only those have difference set $\{ d_1,d_2,\ldots, d_t,1^{r-t}\}$. 

As an example, we show in figure \ref{fig9} a $7$-hook with difference set $\{ 5,4,1^5\}$ where  $s_0=2,s_1=2,s_2=3$. 
\begin{figure}[htbp]
\begin{center}
\includegraphics[width=.25 \textwidth]{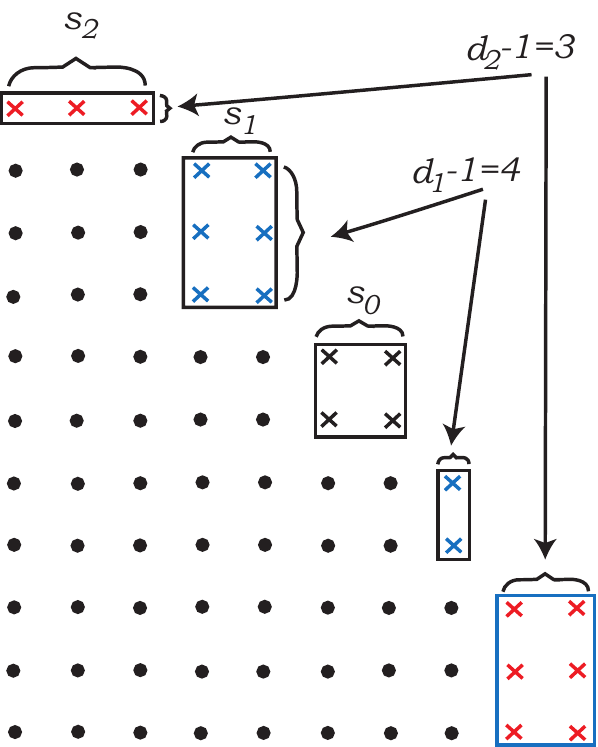}
\caption{A $7$-hook with $d_1=5,\, d_2=3$}
\label{fig9}
\end{center}
\end{figure}\\
So if we want to produce another set of partitions with product of differences equal to $15$, we can either permute the $d_i$, add ones in the difference set, or consider another factorisation of $15$ which is $15$ itself.  The same argument is valid in the general case. 

Of course these sets of partitions do not all have the same weight. Actually, when the $d_i$ are known and the $s_i$ are chosen, their weight is $r^2+\sum_{i=1}^t\left[ (d_i-1)\times \sum_{j=i}^ts_j \right]$. 


\section{More questions}\label{sec7}
There are several  questions that arise from this introductory study of the product of partitions $\l*\m$.  First, how could we export and extend the products $\l*\m$ and $\d_1\circ \d_2$ ? Partitions serve as index set for several families of objects in representation theory such as symmetric functions, irreducible characters and conjugacy classes of the symmetric group $S_n$. The product of partitions defined here induces a product on these family of objects worth investigating.

Moreover, integer partitions have  natural two and three dimensional  extensions. Is it possible to extend the product $\l*\m$ to extensions such as polyominos and plane partitions?  These questions are open for the moment.   


{\bf Acknowledgement} Many thanks to Christophe Reutenauer for helpful discussions and for pointing the 
connection between the monoids  $(P,*)$,  $(P/\sim,*)$ and triangular matrices. For more details on this subject see \cite{BR}. Thanks also to Adriano Garsia for the interesting discussions on this subject. 

\end{document}